\def\dual{\,^{^{\complement}}\!}
\def\Rn{{\mathbb{R}^n}}
\def\i{\infty}
\def\a {\alpha}
 \newtheorem{thm}{Theorem}[section]
 \newtheorem{cor}[thm]{Corollary}
 \newtheorem{lem}[thm]{Lemma}
 \theoremstyle{definition}
 \newtheorem{defn}[thm]{Definition}
 \theoremstyle{remark}
 \newtheorem{rem}[thm]{Remark}
 \numberwithin{equation}{section}
\def\Rn{{\mathbb{R}^n}}
\def\a {\alpha}
\def\i{\infty}
\def\L1loc{L_{\Phi}^{\rm loc}(\Rn)}
\def\dual{\,^{^{\complement}}\!}
\newcommand{\ess}{\mathop{\rm ess \; sup}\limits}
\newcommand{\es}{\mathop{\rm ess \; inf}\limits}
\begin{document}

\begin{center}
\LARGE On the Riesz potential and its commutators on generalized Orlicz-Morrey spaces
\end{center}

\

\centerline{\large Vagif S. Guliyev$^{a,b,}$\footnote{
E-mail adresses: vagif@guliyev.com (V.S. Guliyev), fderingoz@ahievran.edu.tr (F. Deringoz).}, Fatih Deringoz$^{a,1}$}

\

\centerline{$^{a}$\it Department of Mathematics, Ahi Evran University, Kirsehir, Turkey}

\centerline{$^{b}$\it Institute of Mathematics and Mechanics, Baku, Azerbaijan}

\

\begin{abstract}
We consider generalized Orlicz-Morrey spaces $M_{\Phi,\varphi}(\Rn)$ including their weak versions $WM_{\Phi,\varphi}(\Rn)$. In these spaces we prove the boundedness of the Riesz potential from $M_{\Phi,\varphi_1}(\Rn)$ to $M_{\Psi,\varphi_2}(\Rn)$ and from $M_{\Phi,\varphi_1}(\Rn)$ to $WM_{\Psi,\varphi_2}(\Rn)$. As applications of those results, the boundedness of the commutators of the Riesz potential on generalized Orlicz-Morrey space is also obtained. In all the cases the conditions  for the boundedness are given either in terms of Zygmund-type integral inequalities on $(\varphi_{1},\varphi_{2})$, which do not assume any assumption on monotonicity of $\varphi_{1}(x,r)$, $\varphi_{2}(x,r)$ in $r$.
\end{abstract}

\

\

\noindent{\bf AMS Mathematics Subject Classification:} $~~$ 42B20, 42B25, 42B35

\noindent{\bf Key words:} {generalized Orlicz-Morrey space; Riesz potential; commutator, $BMO$ space}

\

\section{Introduction}

The theory of boundedness of classical operators of the real analysis, such as the maximal operator, fractional maximal operator, Riesz potential and the singular integral operators etc, have been
extensively investigated in various function spaces. Results on weak and strong type inequalities for operators of this kind in Lebesgue spaces are classical and can be found for example in \cite{BenSharp, St, Torch}. These boundedness extended to several function spaces which are generalizations of $L_{p}$-spaces, for example, Orlicz spaces, Morrey spaces, Lorentz spaces, Herz spaces, etc.

Orlicz spaces, introduced in \cite{Orlicz1, Orlicz2}, are generalizations of Lebesgue spaces $L_p$. They are useful tools in harmonic analysis and its applications. For example, the Hardy-Littlewood maximal operator is bounded on $L_p$ for $1 < p < \infty$, but not on $L_1$. Using Orlicz spaces, we can investigate the boundedness of
the maximal operator near $p = 1$ more precisely (see  \cite{Cianchi1, Kita1, Kita2}).

It is well known that the Riesz potential $I_{\alpha}$ of order $\a$ ($0 < \a <n$) plays an
important role in harmonic analysis, PDE and potential theory (see \cite{St}). Recall that $I_{\alpha}$ is defined by
$$
I_{\alpha} f(x)=\int_{\Rn}\frac{f(y)}{|x-y|^{n-\alpha} }dy,\qquad x\in\mathbb{R}^n.
$$

The classical result by Hardy-Littlewood-Sobolev states that if $1<p<q<\i$, then the operator $I_{\a}$ is bounded from $L_{p}(\Rn)$ to $L_{q}(\Rn)$ if and only if $\a=n\left(\frac{1}{p}-\frac{1}{q}\right)$ and for $p=1<q<\i$, the operator $I_{\a}$ is bounded from $L_{1}(\Rn)$ to $WL_{q}(\Rn)$ if and only if $\a=n\left(1-\frac{1}{q}\right)$. For boundedness of $I_{\alpha}$ on Morrey spaces $M_{p,\lambda}(\mathbb{R}^{n})$, see Peetre (Spanne)\cite{Peetre}, Adams \cite{Adams}.

The boundedness of $I_{\a}$ from Orlicz space $L_{\Phi}(\Rn)$ to $L_{\Psi}(\Rn)$ was studied by O'Neil \cite{O'Neil} and Torchinsky \cite{Torch1} under some restrictions involving the growths and certain monotonicity properties of $\Phi$ and $\Psi$. Moreover Cianchi \cite{Cianchi1} gave a necessary and sufficient condition for the boundedness of $I_{\a}$ from $L_{\Phi}(\Rn)$ to $L_{\Psi}(\Rn)$ and from $L_{\Phi}(\Rn)$ to weak Orlicz space $WL_{\Psi}(\Rn)$, which contain results above.

In \cite{DGS} the authors were study the boundedness of the maximal operator $M$ and the Calder\'{o}n-Zygmund operator $T$ from one generalized Orlicz-Morrey space $M_{\Phi,\varphi_1}(\Rn)$ to $M_{\Phi,\varphi_2}(\Rn)$ and from $M_{\Phi,\varphi_1}(\Rn)$ to the weak space $WM_{\Phi,\varphi_2}(\Rn)$.

Our definition of Orlicz-Morrey spaces (see Section \ref{secorlmor}) is different from that of Sawano et al. \cite{SawSugTan} and Nakai \cite{Nakai3, Nakai1}.

The main purpose of this paper is to find sufficient conditions on general Young functions $\Phi, \Psi$ and functions $\varphi_1$, $\varphi_2$ which ensure the boundedness of the Riesz potential $I_{\alpha}$ from one generalized Orlicz-Morrey spaces $M_{\Phi,\varphi_1}(\Rn)$ to another $M_{\Psi,\varphi_2}(\Rn)$, from $M_{\Phi,\varphi_1}(\Rn)$ to weak generalized Orlicz-Morrey spaces $WM_{\Psi,\varphi_2}(\Rn)$ and the boundedness of the commutator of the Riesz potential $[b,I_{\alpha}]$ from $M_{\Phi,\varphi_1}(\Rn)$ to $M_{\Psi,\varphi_2}(\Rn)$.

In the next section we recall the definitions of Orlicz and Morrey spaces and give the definition of Orlicz-Morrey and generalized Orlicz-Morrey spaces in Section 3. In Section 4 and Section 5 the results on the boundedness of the Riesz potential and its commutator on generalized Orlicz-Morrey spaces is obtained.

By $A\lesssim B$ we mean that $A\le CB$ with some positive constant $C$ independent
of appropriate quantities. If $A\lesssim B$ and $B\lesssim A$, we write $A\thickapprox B$ and say that $A$
and $B$ are equivalent.

\

\section{Some preliminaries on Orlicz and Morrey spaces}

In the study of local properties of solutions to of partial differential equations, together with weighted Lebesgue spaces,  Morrey spaces $M_{p,\lambda}(\Rn)$ play an important role, see \cite{Gi}. Introduced by C. Morrey  \cite{Morrey} in 1938, they are defined by the norm
\begin{equation*}
\left\| f\right\|_{M_{p,\lambda}}: = \sup_{x, \; r>0 } r^{-\frac{\lambda}{p}} \|f\|_{L_{p}(B(x,r))},
\end{equation*}
where $0 \le \lambda \le  n,$ $1\le p < \infty.$ Here and everywhere in the sequel $B(x,r)$ stands for the
ball in $\mathbb{R}^n$ of radius $r$ centered at $x$. Let $|B(x,r)|$ be the Lebesgue measure of the ball $B(x,r)$ and $|B(x,r)|=v_n r^n$, where $v_n$ is the volume of the unit ball in $\Rn$.

Note that $M_{p,0}=L_{p}(\Rn)$ and $M_{p,n}=L_{\infty}(\Rn)$. If $\lambda<0$ or $\lambda>n$, then $M_{p,\lambda }={\Theta}$,
where $\Theta$ is the set of all functions equivalent to $0$ on $\Rn$.

We also denote  by $WM_{p,\lambda} \equiv WM_{p,\lambda}(\Rn)$ the weak Morrey space of all functions $f\in WL_{p}^{\rm loc}(\Rn) $ for which
$$
\left\| f\right\|_{WM_{p,\lambda }} = \sup_{x\in \Rn, \; r>0} r^{-\frac{\lambda}{p}} \|f\|_{WL_{p}(B(x,r))} <\infty,
$$
where $WL_{p}(B(x,r))$ denotes the weak $L_p$-space.

We refer in particular to \cite{KJF} for the classical Morrey spaces.

We recall the definition of Young functions.

\begin{defn}\label{def2} A function $\Phi : [0,+\infty) \rightarrow [0,\infty]$ is called a Young function if $\Phi$ is convex, left-continuous, $\lim\limits_{r\rightarrow +0} \Phi(r) = \Phi(0) = 0$ and $\lim\limits_{r\rightarrow +\infty} \Phi(r) = \infty$.
\end{defn}

From the convexity and $\Phi(0) = 0$ it follows that any Young function is increasing.
If there exists $s \in (0,+\infty)$ such that $\Phi(s) = +\infty$, then $\Phi(r) = +\infty$ for $r \geq s$.

Let $\mathcal{Y}$ be the set of all Young functions $\Phi$ such that
\begin{equation*}
0<\Phi(r)<+\infty\qquad \text{for} \qquad 0<r<+\infty
\end{equation*}
If $\Phi \in \mathcal{Y}$, then $\Phi$ is absolutely continuous on every closed interval in $[0,+\infty)$
and bijective from $[0,+\infty)$ to itself.

\begin{defn} (Orlicz Space). For a Young function $\Phi$, the set
$$L_{\Phi}(\Rn)=\left\{f\in L_1^{loc}(\Rn): \int_{\Rn}\Phi(k|f(x)|)dx<+\infty
 \text{ for some $k>0$  }\right\}$$
is called Orlicz space. If $\Phi(r)=r^{p},\, 1\le p<\i$, then $L_{\Phi}(\Rn)=L_{p}(\Rn)$. If $\Phi(r)=0,\,(0\le r\le 1)$ and $\Phi(r)=\i,\,(r> 1)$, then $L_{\Phi}(\Rn)=L_{\i}(\Rn)$. The  space $L_{\Phi}^{\rm loc}(\Rn)$ endowed with the natural topology  is defined as the set of all functions $f$ such that  $f\chi_{_B}\in L_{\Phi}(\Rn)$ for all balls $B \subset \Rn$. We refer to the books \cite{KokKrbec, KrasnRut, RaoRen} for the theory of Orlicz Spaces.
\end{defn}

$L_{\Phi}(\Rn)$ is a Banach space with respect to the norm
$$\|f\|_{L_{\Phi}}=\inf\left\{\lambda>0:\int_{\Rn}\Phi\Big(\frac{|f(x)|}{\lambda}\Big)dx\leq 1\right\}.$$
We note that
$$\int_{\Rn}\Phi\Big(\frac{|f(x)|}{\|f\|_{L_{\Phi}}}\Big)dx\leq 1.$$
For a measurable set $\Omega\subset \mathbb{R}^{n}$, a measurable function $f$ and $t>0$, let
$$
m(\Omega,\ f,\ t)=|\{x\in\Omega:|f(x)|>t\}|.
$$
In the case $\Omega=\mathbb{R}^{n}$, we shortly denote it by $m(f,\ t)$.
\begin{defn} The weak Orlicz space
$$
WL_{\Phi}(\mathbb{R}^{n}):=\{f\in L_{1\mathrm{o}\mathrm{c}}^{1}(\mathbb{R}^{n}):\Vert f\Vert_{WL_{\Phi}}<+\infty\}
$$
is defined by the norm
$$
\Vert f\Vert_{WL_{\Phi}}=\inf\Big\{\lambda>0\ :\ \sup_{t>0}\Phi(t)m\Big(\frac{f}{\lambda},\ t\Big)\ \leq 1\Big\}.
$$
\end{defn}

For a Young function $\Phi$ and  $0 \leq s \leq +\infty$, let
$$\Phi^{-1}(s)=\inf\{r\geq 0: \Phi(r)>s\}\qquad (\inf\emptyset=+\infty).$$
If $\Phi \in \mathcal{Y}$, then $\Phi^{-1}$ is the usual inverse function of $\Phi$. We note that
\begin{equation}\label{younginverse}
\Phi(\Phi^{-1}(r))\leq r \leq \Phi^{-1}(\Phi(r)) \quad \text{ for } 0\leq r<+\infty.
\end{equation}
A Young function $\Phi$ is said to satisfy the $\Delta_2$-condition, denoted by  $\Phi \in \Delta_2$, if
$$
\Phi(2r)\le k\Phi(r) \text{    for } r>0
$$
for some $k>1$. If $\Phi \in \Delta_2$, then $\Phi \in \mathcal{Y}$. A Young function $\Phi$ is said to satisfy the $\nabla_2$-condition, denoted also by  $\Phi \in \nabla_2$, if
$$\Phi(r)\leq \frac{1}{2k}\Phi(kr),\qquad r\geq 0,$$
for some $k>1$. The function $\Phi(r) = r$ satisfies the $\Delta_2$-condition but does not satisfy the $\nabla_2$-condition.
If $1 < p < \infty$, then $\Phi(r) = r^p$ satisfies both the conditions. The function $\Phi(r) = e^r - r - 1$ satisfies the
$\nabla_2$-condition but does not satisfy the $\Delta_2$-condition.

For a Young function $\Phi$, the complementary function $\widetilde{\Phi}(r)$ is defined by
\begin{equation*}
\widetilde{\Phi}(r)=\left\{
\begin{array}{ccc}
\sup\{rs-\Phi(s): s\in [0,\infty)\}
& , & r\in [0,\infty), \\
+\infty&,& r=+\infty.
\end{array}
\right.
\end{equation*}
The complementary function  $\widetilde{\Phi}$ is also a Young function and $\widetilde{\widetilde{\Phi}}=\Phi$. If $\Phi(r)=r$, then $\widetilde{\Phi}(r)=0$ for $0\leq r \leq 1$ and $\widetilde{\Phi}(r)=+\infty$
  for $r>1$. If $1 < p < \infty$, $1/p+1/p^\prime= 1$ and $\Phi(r) =
r^p/p$, then $\widetilde{\Phi}(r) = r^{p^\prime}/p^\prime$. If $\Phi(r) = e^r-r-1$, then $\widetilde{\Phi}(r) = (1+r) \log(1+r)-r$. Note that $\Phi \in \nabla_2$ if and only if $\widetilde{\Phi} \in \Delta_2$. It is known that
\begin{equation}\label{2.3}
r\leq \Phi^{-1}(r)\widetilde{\Phi}^{-1}(r)\leq 2r \qquad \text{for } r\geq 0.
\end{equation}

Note that Young functions satisfy the properties
$$
\left\{
\begin{array}{ccc}
\Phi(\alpha t)\leq \alpha \Phi(t),
& \text{  if } &0\le\a\le1 \\
\Phi(\a t)\geq \a \Phi(t),&\text{  if }& \a>1
\end{array}
\right.
\text{ and }
\left\{
\begin{array}{ccc}
\Phi^{-1}(\alpha t)\geq \alpha \Phi^{-1}(t),
& \text{  if } &0\le\a\le1 \\
\Phi^{-1}(\a t)\leq \a \Phi^{-1}(t),&\text{  if }& \a>1.
\end{array}
\right.
$$

The following analogue of the H\"older inequality is known, see  \cite{Weiss}.
\begin{thm} \cite{Weiss} \label{HolderOr}
For a Young function $\Phi$ and its complementary function  $\widetilde{\Phi}$,
the following inequality is valid
$$\|fg\|_{L_{1}(\Rn)} \leq 2 \|f\|_{L_{\Phi}} \|g\|_{L_{\widetilde{\Phi}}}.$$
\end{thm}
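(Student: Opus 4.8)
The plan is to deduce the inequality from the pointwise \emph{Young inequality}
$$
st \le \Phi(s) + \widetilde{\Phi}(t), \qquad s,t \ge 0,
$$
which is immediate from the definition of the complementary function: since $\widetilde{\Phi}(t) = \sup\{st - \Phi(s): s\in[0,\infty)\}$, we have $\widetilde{\Phi}(t) \ge st - \Phi(s)$ for every $s$, and rearranging gives the claim. First I would dispose of the trivial cases. If either $\|f\|_{L_{\Phi}} = 0$ or $\|g\|_{L_{\widetilde{\Phi}}} = 0$, then the corresponding function vanishes almost everywhere and both sides are $0$; if either norm is $+\infty$, the right-hand side is infinite and there is nothing to prove. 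Hence I may assume $\lambda := \|f\|_{L_{\Phi}} \in (0,\infty)$ and $\mu := \|g\|_{L_{\widetilde{\Phi}}} \in (0,\infty)$.

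The core step is normalization followed by integration. By the remark recorded after the definition of the Luxemburg norm, the defining infimum yields, in the averaged sense,
$$
\int_{\Rn}\Phi\Big(\frac{|f(x)|}{\lambda}\Big)\,dx \le 1
\qquad\text{and}\qquad
\int_{\Rn}\widetilde{\Phi}\Big(\frac{|g(x)|}{\mu}\Big)\,dx \le 1.
$$
Applying Young's inequality pointwise with $s = |f(x)|/\lambda$ and $t = |g(x)|/\mu$ gives
$$
\frac{|f(x)g(x)|}{\lambda\mu}
\le \Phi\Big(\frac{|f(x)|}{\lambda}\Big) + \widetilde{\Phi}\Big(\frac{|g(x)|}{\mu}\Big).
$$
Integrating over $\Rn$ and using the two normalization bounds produces $\frac{1}{\lambda\mu}\|fg\|_{L_{1}(\Rn)} \le 1+1 = 2$, that is, $\|fg\|_{L_{1}(\Rn)} \le 2\lambda\mu = 2\|f\|_{L_{\Phi}}\|g\|_{L_{\widetilde{\Phi}}}$, as desired. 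Note that the constant $2$ in the statement arises precisely from summing the two unit bounds.

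The argument is short, and the only real care is needed where $\Phi$ is permitted to take the value $+\infty$ (for instance the choice producing $L_{\infty}$). In that situation, if $|f(x)|/\lambda$ were to exceed the jump point of $\Phi$ on a set of positive measure, then $\int_{\Rn}\Phi(|f|/\lambda)\,dx$ would be infinite, contradicting the normalization bound; hence that set is null and the pointwise Young inequality holds almost everywhere with a finite right-hand side, after which the integration step proceeds unchanged. I expect this measure-theoretic bookkeeping at the endpoints, rather than any analytic difficulty, to be the only point requiring attention, since the Luxemburg normalization inequality and the Legendre-duality form of Young's inequality carry out all of the substantive work.
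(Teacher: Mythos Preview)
Your argument is correct and is the standard proof of the Orlicz--H\"older inequality via the pointwise Young inequality and the Luxemburg normalization $\int_{\Rn}\Phi(|f|/\|f\|_{L_\Phi})\,dx\le 1$. Note, however, that the paper does not supply its own proof of this theorem: it is stated as a known result and attributed to Weiss \cite{Weiss}, so there is no in-paper argument to compare against. Your proof is exactly what one would expect to find in the cited reference.
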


The following lemma is valid.
\begin{lem}\label{lem4.0}  \cite{BenSharp, LiuWang}
Let $\Phi$ be a Young function and $B$ a set in $\mathbb{R}^n$ with finite Lebesgue measure. Then
$$
\|\chi_{_B}\|_{WL_{\Phi}(\Rn)} = \|\chi_{_B}\|_{L_{\Phi}(\Rn)} = \frac{1}{\Phi^{-1}\left(|B|^{-1}\right)}.
$$
\end{lem}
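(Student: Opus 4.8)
The plan is to compute both norms directly from their definitions, reducing each to the very same scalar inequality $|B|\,\Phi(1/\lambda)\le 1$, and then to read off the infimum over $\lambda$ through the generalized inverse $\Phi^{-1}$.

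First I would treat the strong norm. Since $\chi_{_B}$ equals $1$ on $B$ and $0$ elsewhere and $\Phi(0)=0$, for every $\lambda>0$ one has $\int_{\Rn}\Phi(\chi_{_B}(x)/\lambda)\,dx=|B|\,\Phi(1/\lambda)$. Hence the modular condition defining $\|\chi_{_B}\|_{L_{\Phi}}$ becomes $\Phi(1/\lambda)\le|B|^{-1}$, so that
$$\|\chi_{_B}\|_{L_{\Phi}}=\inf\{\lambda>0:\Phi(1/\lambda)\le|B|^{-1}\}.$$
It then remains to evaluate this infimum. Putting $u=1/\lambda$, minimizing $\lambda$ amounts to maximizing $u$ subject to $\Phi(u)\le|B|^{-1}$, so I would invoke the left-continuity and monotonicity of $\Phi$ together with the definition $\Phi^{-1}(s)=\inf\{r\ge 0:\Phi(r)>s\}$ to identify $\{u\ge 0:\Phi(u)\le|B|^{-1}\}=[0,\Phi^{-1}(|B|^{-1})]$; the right endpoint is admissible because $\Phi(\Phi^{-1}(s))\le s$ by \eqref{younginverse}. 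This yields the supremum $\Phi^{-1}(|B|^{-1})$, whence $\|\chi_{_B}\|_{L_{\Phi}}=1/\Phi^{-1}(|B|^{-1})$.

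Next I would handle the weak norm by computing the distribution function of $\chi_{_B}/\lambda$. Since $\chi_{_B}/\lambda$ takes only the values $1/\lambda$ (on $B$) and $0$, one checks that $m(\chi_{_B}/\lambda,t)=|B|$ for $0<t<1/\lambda$ and $m(\chi_{_B}/\lambda,t)=0$ for $t\ge 1/\lambda$. Consequently $\sup_{t>0}\Phi(t)\,m(\chi_{_B}/\lambda,t)=|B|\sup_{0<t<1/\lambda}\Phi(t)=|B|\,\Phi(1/\lambda)$, the last equality again using the left-continuity of $\Phi$. Thus the defining condition for $\|\chi_{_B}\|_{WL_{\Phi}}$ is literally the same inequality $\Phi(1/\lambda)\le|B|^{-1}$ as above, and the identical computation delivers $\|\chi_{_B}\|_{WL_{\Phi}}=1/\Phi^{-1}(|B|^{-1})$.

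The computations themselves are short; the only delicate point is the passage through the generalized inverse. Because $\Phi^{-1}$ need not be a genuine inverse for an arbitrary Young function (which may be flat or may jump to $+\infty$), I expect the care to be needed in justifying that the sublevel set $\{\Phi\le s\}$ is exactly the closed interval with right endpoint $\Phi^{-1}(s)$, and in the weak case that $\sup_{0<t<1/\lambda}\Phi(t)=\Phi(1/\lambda)$; both facts rest on left-continuity, and it is precisely here that the argument must avoid tacitly assuming $\Phi\in\mathcal{Y}$.
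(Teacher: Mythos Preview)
Your argument is correct and is the standard direct computation. The paper does not supply its own proof of this lemma; it merely cites \cite{BenSharp, LiuWang}, so there is no in-paper argument to compare against. Your handling of the one genuinely subtle point---identifying $\{u\ge 0:\Phi(u)\le s\}$ with the closed interval $[0,\Phi^{-1}(s)]$ via left-continuity and the inequality $\Phi(\Phi^{-1}(s))\le s$ from \eqref{younginverse}, without assuming $\Phi\in\mathcal{Y}$---is exactly what is needed, and the reduction of the weak-norm condition to the same scalar inequality $|B|\,\Phi(1/\lambda)\le 1$ is clean.
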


In the next sections where we prove our main estimates, we use the following lemma, which follows  from Theorem \ref{HolderOr}, Lemma \ref{lem4.0} and \eqref{2.3}.
\begin{lem}\label{lemHold}
For a Young function $\Phi$ and $B=B(x,r)$, the following inequality is valid
$$\|f\|_{L_{1}(B)} \leq 2 |B| \Phi^{-1}\left(|B|^{-1}\right) \|f\|_{L_{\Phi}(B)} .$$
\end{lem}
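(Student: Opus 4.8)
The plan is to realize the $L_1$-norm over $B$ as an ordinary $L_1$-norm of a product and then apply the Orlicz H\"older inequality with the characteristic function of $B$ as the second factor. First I would write $\int_B |f(x)|\,dx = \int_{\Rn}|f(x)|\chi_{_B}(x)\,dx$ and read the integrand as the product $(f\chi_{_B})\cdot\chi_{_B}$. Applying Theorem \ref{HolderOr} to the pair $(f\chi_{_B},\chi_{_B})$ then gives
$$\|f\|_{L_{1}(B)} \le 2\,\|f\chi_{_B}\|_{L_{\Phi}}\,\|\chi_{_B}\|_{L_{\widetilde{\Phi}}} = 2\,\|f\|_{L_{\Phi}(B)}\,\|\chi_{_B}\|_{L_{\widetilde{\Phi}}},$$
where $\widetilde{\Phi}$ is the complementary Young function of $\Phi$. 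The reason for pairing $f$ with $\chi_{_B}$ is exactly that it localizes the Orlicz norm of $f$ to $B$ while shifting the rest of the work onto the explicitly computable norm of the characteristic function.

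Next I would evaluate that remaining factor. Since $B=B(x,r)$ has finite Lebesgue measure, Lemma \ref{lem4.0} applied to the Young function $\widetilde{\Phi}$ yields $\|\chi_{_B}\|_{L_{\widetilde{\Phi}}} = 1/\widetilde{\Phi}^{-1}\!\left(|B|^{-1}\right)$. The only thing left is to convert $\widetilde{\Phi}^{-1}$ back into $\Phi^{-1}$, which is where the inverse-function estimate \eqref{2.3} enters: taking $r=|B|^{-1}$ there gives $|B|^{-1} \le \Phi^{-1}\!\left(|B|^{-1}\right)\,\widetilde{\Phi}^{-1}\!\left(|B|^{-1}\right)$, and hence
$$\frac{1}{\widetilde{\Phi}^{-1}\!\left(|B|^{-1}\right)} \le |B|\,\Phi^{-1}\!\left(|B|^{-1}\right).$$
Substituting this into the displayed H\"older bound produces precisely $\|f\|_{L_{1}(B)} \le 2\,|B|\,\Phi^{-1}\!\left(|B|^{-1}\right)\,\|f\|_{L_{\Phi}(B)}$, which is the asserted inequality.

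I do not expect a genuine obstacle here: the statement is a routine assembly of the three cited ingredients. The one point that requires a little care is the choice of factorization in H\"older's inequality, namely putting $f$ on the $L_{\Phi}$ side and $\chi_{_B}$ on the $L_{\widetilde{\Phi}}$ side, so that the complementary function lands on the characteristic-function factor whose norm is known; the remaining estimates are then completely mechanical.
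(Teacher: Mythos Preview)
Your proof is correct and follows exactly the approach indicated by the paper, which simply states that the lemma follows from Theorem~\ref{HolderOr}, Lemma~\ref{lem4.0} and \eqref{2.3} without spelling out the details. Your factorization $(f\chi_{_B})\cdot\chi_{_B}$, the evaluation $\|\chi_{_B}\|_{L_{\widetilde{\Phi}}}=1/\widetilde{\Phi}^{-1}(|B|^{-1})$, and the use of the left inequality in \eqref{2.3} to bound $1/\widetilde{\Phi}^{-1}(|B|^{-1})$ by $|B|\,\Phi^{-1}(|B|^{-1})$ are precisely the intended steps.
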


\

\section{Orlicz-Morrey and Generalized Orlicz-Morrey Spaces}\label{secorlmor}

\begin{defn} (Orlicz-Morrey Space). For a Young function $\Phi$ and $0 \le \lambda \le n$,
we denote by $M_{\Phi,\lambda}(\Rn)$ the Orlicz-Morrey space, the space of all
functions $f\in L_{\Phi}^{\rm loc}(\Rn)$ with finite quasinorm
$$
  \left\| f\right\|_{M_{\Phi,\lambda}}= \sup_{x\in \Rn, \; r>0 }
   \Phi^{-1}\big(r^{-\lambda}\big) \|f\|_{L_{\Phi}(B(x,r))}.
$$
\end{defn}
Note that $M_{\Phi,0}=L_{\Phi}(\Rn)$ and if $\Phi(r)=r^{p},\,1\le p<\i$, then $M_{\Phi,\lambda}(\Rn)=M_{p,\lambda}(\Rn)$.

We also denote  by $WM_{\Phi,\lambda}(\Rn)$ the weak Orlicz-Morrey space of all functions $f\in WL_{\Phi}^{\rm loc}(\Rn) $ for which
$$
\left\| f\right\|_{WM_{\Phi,\lambda }} = \sup_{x\in \Rn, \; r>0 }
\Phi^{-1}\big(r^{-\lambda}\big) \|f\|_{WL_{\Phi}(B(x,r))} <\infty,
$$
where $WL_{\Phi}(B(x,r))$ denotes the weak $L_\Phi$-space of measurable functions
$f$ for which
\begin{align*}
\|f\|_{WL_{\Phi}(B(x,r))} & \equiv \|f \chi_{_{B(x,r)}}\|_{WL_{\Phi}(\Rn)}.
\end{align*}

\begin{defn} (generalized Orlicz-Morrey Space)
Let $\varphi(x,r)$ be a positive measurable function on $\Rn \times (0,\infty)$ and $\Phi$ any Young function.
We denote by $M_{\Phi,\varphi}(\Rn)$ the generalized Orlicz-Morrey space, the space of all
functions $f\in L_{\Phi}^{\rm loc}(\Rn)$ with finite quasinorm
$$
\|f\|_{M_{\Phi,\varphi}} = \sup\limits_{x\in\Rn, r>0}
\varphi(x,r)^{-1} \Phi^{-1}(|B(x,r)|^{-1}) \|f\|_{L_{\Phi}(B(x,r))}.
$$
Also by $WM_{\Phi,\varphi}(\Rn)$ we denote the weak generalized Orlicz-Morrey space of all functions $f\in WL_{\Phi}^{\rm loc}(\Rn)$ for which
$$
\|f\|_{WM_{p,\varphi}} = \sup\limits_{x\in\Rn, r>0} \varphi(x,r)^{-1} \Phi^{-1}(|B(x,r)|^{-1}) \|f\|_{WL_{\Phi}(B(x,r))} < \infty.
$$
\end{defn}

According to this definition, we recover the spaces $M_{\Phi,\lambda}$ and $WM_{\Phi,\lambda}$ under the choice $\varphi(x,r)=\frac{\Phi^{-1}\big(r^{-n}\big)}{\Phi^{-1}\big(r^{-\lambda}\big)}$:
$$
M_{\Phi,\lambda}=
M_{\Phi,\varphi}\Big|_{\varphi(x,r)=\frac{\Phi^{-1}\big(r^{-n}\big)}{\Phi^{-1}\big(r^{-\lambda}\big)}}, \quad
WM_{\Phi,\lambda}=
WM_{\Phi,\varphi}\Big|_{\frac{\Phi^{-1}\big(r^{-n}\big)}{\Phi^{-1}\big(r^{-\lambda}\big)}}.
$$

According to this definition, we recover the generalized Morrey spaces $M_{p,\varphi}$ and weak generalized Morrey spaces $WM_{p,\varphi}$ under the choice $\Phi(r)=r^{p},\,1\le p<\i$:
$$
M_{p,\varphi}=
M_{\Phi,\varphi}\Big|_{\Phi(r)=r^{p}}, \quad
WM_{p,\varphi}=
WM_{\Phi,\varphi}\Big|_{\Phi(r)=r^{p}}.
$$

\begin{rem}
There are different kinds of Orlicz-Morrey spaces in the literature. We want to make some comment about these spaces.

Let $\varphi:(0,\infty) \to (0,\infty)$
be a function
and
$\Phi:(0,\infty) \to (0,\infty)$
a Young function.
\begin{enumerate}
\item
For a cube $Q$, define $(\varphi,\Phi)$-average over $Q$ by
\[
\|f\|_{(\varphi,\Phi);Q}
:=
\inf\left\{
\lambda>0\,:\,
\frac{\varphi(|Q|)}{|Q|}\int_{Q}\Phi\left(\frac{|f(x)|}{\lambda}\right)\,dx
\le 1
\right\}
\]
and define its $\Phi$-average over $Q$ by
\[
\|f\|_{\Phi;Q}
:=
\inf\left\{
\lambda>0\,:\,
\frac{1}{|Q|}\int_{Q}\Phi\left(\frac{|f(x)|}{\lambda}\right)\,dx
\le 1 \right\}.
\]
\item
Define
\[
\|f\|_{{\mathcal L}_{\varphi,\Phi}}
:=
\sup_{Q \in {\mathcal Q}}
\|f\|_{(\varphi,\Phi);Q}.
\]
The function space ${\mathcal L}_{\varphi,\Phi}$
is defined to be the Orlicz-Morrey space
of the first kind as the set of all measurable functions
$f$ for which the norm $\|f\|_{{\mathcal L}_{\varphi,\Phi}}$
is finite.
\item
Define
\[
\|f\|_{{\mathcal M}_{\varphi,\Phi}}
:=
\sup_{Q \in {\mathcal Q}}
\varphi(|Q|)\|f\|_{\Phi;Q}.
\]
The function space ${\mathcal M}_{\varphi,\Phi}$
is defined to be the Orlicz-Morrey space
of the second kind
as the set of all measurable functions
$f$ for which the norm $\|f\|_{{\mathcal M}_{\varphi,\Phi}}$
is finite.
\end{enumerate}

According to our best knowledge,
it seems that ${\mathcal L}_{\varphi,\Phi}$
is more investigated than ${\mathcal M}_{\varphi,\Phi}$.
The space ${\mathcal L}_{\varphi,\Phi}$ is investigated
in \cite{LNYZ,LY,LT,MMOS13-1,MMOS13-2,MNOS10,MNOS12,Nakai3,N08-1,Nakai1,N11,SST06}
and the space ${\mathcal M}_{\varphi,\Phi}$ is investigated
in \cite{GST12,GST13,ISST12,SawSugTan}.

\end{rem}

\

\section{ Boundedness of the Riesz Potential in generalized Orlicz-Morrey spaces}

In this section sufficient conditions on the pairs $(\varphi_1, \varphi_2)$ and $(\Phi, \Psi)$ for the boundedness of $I_{\a}$ from one generalized Orlicz-Morrey spaces $M_{\Phi,\varphi_1}(\Rn)$ to another $M_{\Psi,\varphi_2}(\Rn)$ and from $M_{\Phi,\varphi_1}(\Rn)$ to the weak space $WM_{\Psi,\varphi_2}(\Rn)$  have been obtained.

Necessary and sufficient conditions on $(\Phi, \Psi)$ for the boundedness of $I_{\a}$ from $L_{\Phi}(\Rn)$ to $L_{\Psi}(\Rn)$ and $L_{\Phi}(\Rn)$ to $WL_{\Psi}(\Rn)$
have been obtained in \cite[Theorem 2]{Cianchi1}. In the statement of the theorem, $\Psi_{p}$ is the Young function associated with the Young function $\Psi$ and $p\in(1,\i]$ whose Young conjugate is given by
\begin{equation}\label{chi2.3}
\widetilde{\Psi_{p}}(s)=\int_{0}^{s}r^{p^{\prime}-1}(\mathcal{B}_{p}^{-1}(r^{p^{\prime}}))^{p^{\prime}}dr
\end{equation}
where
\begin{center}
$\mathcal{B}_{p}(s)= \int_{0}^{s}\frac{\Psi(t)}{t^{1+p^{\prime}}}dt$
\end{center}
and $p^{\prime}$, the Holder conjugate of $p$, equals either $p/(p-1)$ or 1, according to whether $ p<\infty$ or $ p=\infty$ and $\Phi_{p}$ denotes the Young function defined by
\begin{equation}\label{chi2.13}
\Phi_{p}(s)= \int_{0}^{s}r^{p^{\prime}-1}(\mathcal{A}_{p}^{-1}(r^{p'}))^{p'}dr
\end{equation}
where
\begin{center}
$\mathcal{A}_{p}(s)= \int_{0}^{s}\frac{\widetilde{\Phi}(t)}{t^{1+p^{\prime}}}dt$.
\end{center}

Recall that, if $\Phi$ and $\Psi$ are functions from $[0,\i)$ into $[0,\i]$, then $\Psi$ is said to dominate $\Phi$ globally if a positive constant $c$ exists such that $\Phi(s)\le\Psi(cs)$ for all $s\geq0$.

\begin{thm}\label{bounFrMaxOrl} \cite{Cianchi1}
Let $0<\a<n$. Let $\Phi$ and $\Psi$ Young functions and let $\Phi_{n/\alpha}$ and $\Psi_{n/\alpha}$ be the Young functions defined as in \eqref{chi2.13} and \eqref{chi2.3}, respectively. Then

(i)The Riesz potential $I_{\a}$ is bounded from $L_{\Phi}(\Rn)$ to $WL_{\Psi}(\Rn)$ if and only if
\begin{equation}\label{condweak}
\int_{0}^{1}\widetilde{\Phi}(t)/t^{1+n/(n-\alpha)}dt<\infty \text{  and $\Phi_{n/\alpha}$ dominates $\Psi$ globally.}
\end{equation}

(ii) The Riesz potential $I_{\a}$ is bounded from $L_{\Phi}(\Rn)$ to $L_{\Psi}(\Rn)$ if and only if
\begin{align}\label{condstr}
&\int_{0}^{1}\widetilde{\Phi}(t)/t^{1+n/(n-\alpha)}dt<\i, ~~ \int_{0}^{1}\Psi(t)/t^{1+n/(n-\alpha)}dt<\i, \notag
\\
& \text{ $\Phi$ dominates $\Psi_{n/\alpha}$  globally and $\Phi_{n/\alpha}$ dominates $\Psi$ globally.}
\end{align}
\end{thm}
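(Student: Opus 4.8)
The plan is to reduce the two-sided estimate for $I_{\a}$ to a one-dimensional Hardy-type inequality for decreasing rearrangements, and then to characterize that inequality in the Orlicz scale; the functions $\Phi_{n/\a}$ and $\Psi_{n/\a}$ defined in \eqref{chi2.13} and \eqref{chi2.3} are tailored precisely to be the optimal Young functions arising in this reduction.

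First I would invoke the O'Neil rearrangement inequality for convolutions. Since $I_{\a}f=f*k$ with $k(x)=|x|^{\a-n}$ and $k^{*}(t)\thickapprox t^{\a/n-1}$, O'Neil's inequality yields the pointwise bound
$$(I_{\a}f)^{**}(t)\lesssim t^{\a/n-1}\int_{0}^{t}f^{*}(s)\,ds+\int_{t}^{\i}s^{\a/n-1}f^{*}(s)\,ds=:(Tf^{*})(t).$$
Because the Orlicz norm depends only on the decreasing rearrangement, boundedness of $I_{\a}\colon L_{\Phi}(\Rn)\to L_{\Psi}(\Rn)$ is implied by boundedness of the model operator $T$ between the corresponding Orlicz spaces on $(0,\i)$. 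For the converse I would test $I_{\a}$ on radially decreasing functions, for which the rearrangement estimate is reversible up to constants, thereby showing the two boundedness properties are equivalent. Note the index $n/\a$ enters through the exponent $\a/n-1$, and its conjugate satisfies $(n/\a)'=n/(n-\a)$, which is exactly the weight $t^{1+n/(n-\a)}$ appearing in the integral conditions.

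Next I would analyze $T$ by splitting it into the averaging part $Pg(t)=t^{\a/n-1}\int_{0}^{t}g$ and its formal adjoint $Qg(t)=\int_{t}^{\i}s^{\a/n-1}g(s)\,ds$. The heart of the matter is the modular characterization of Hardy operators in Orlicz spaces: one shows that $\Phi_{n/\a}$ is the \emph{optimal} target for $P$ acting from $L_{\Phi}$, so that $P\colon L_{\Phi}\to L_{\Psi}$ holds iff $\Phi_{n/\a}$ dominates $\Psi$ globally; dually, $Q\colon L_{\Phi}\to L_{\Psi}$ holds iff $\Phi$ dominates $\Psi_{n/\a}$ globally, the function $\Psi_{n/\a}$ being defined through its conjugate in \eqref{chi2.3} precisely so that this duality closes. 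Combining the two pieces gives the pair of domination conditions in \eqref{condstr}. The integral conditions $\int_{0}^{1}\widetilde{\Phi}(t)/t^{1+n/(n-\a)}\,dt<\i$ and $\int_{0}^{1}\Psi(t)/t^{1+n/(n-\a)}\,dt<\i$ are exactly the requirements that $\mathcal{A}_{n/\a}$ and $\mathcal{B}_{n/\a}$, and hence $\Phi_{n/\a}$ and $\Psi_{n/\a}$, be finite genuine Young functions rather than identically infinite; they are independently necessary, since their failure forces $I_{\a}$ of a suitable test function out of the local target class. The weak-type statement (i) follows from the same scheme with the strong Hardy inequality replaced by its weak analogue: only the global part $Q$ is genuinely constrained, so a single domination condition, that $\Phi_{n/\a}$ dominates $\Psi$, survives.

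The main obstacle I anticipate is proving the optimality of $\Phi_{n/\a}$ and $\Psi_{n/\a}$ and the exact equivalence with the domination conditions. Establishing the forward modular inequality $\int_{0}^{\i}\Phi_{n/\a}(Pg)\lesssim\int_{0}^{\i}\Phi(g)$ together with its sharpness requires a delicate construction of near-extremal rearrangements and a careful inversion of the nonlinear definitions \eqref{chi2.13} and \eqref{chi2.3}; verifying that these opaque integral formulas really do produce the optimal Young functions, and that the reverse rearrangement estimate holds on radial test functions with no loss, is the genuinely technical core, whereas the reduction via O'Neil's inequality and the splitting of $T$ are comparatively routine.
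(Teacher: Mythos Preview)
The paper does not prove this theorem at all: it is quoted verbatim from Cianchi \cite{Cianchi1} and used as a black box input to Lemma~\ref{lemGultecnOrl} and Theorem~\ref{thm4.4.}. There is therefore no ``paper's own proof'' to compare your proposal against.

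That said, your outline is a faithful sketch of Cianchi's actual argument in \cite{Cianchi1}: the reduction via O'Neil's rearrangement inequality to a one-dimensional Hardy-type operator on $(0,\infty)$, the splitting into the averaging piece and its adjoint, and the identification of $\Phi_{n/\alpha}$ and $\Psi_{n/\alpha}$ as the optimal Young functions in the resulting Hardy inequalities are exactly the ingredients of the original proof. One small slip: in your discussion of part~(i) you say that for the weak-type bound ``only the global part $Q$ is genuinely constrained'' but then conclude with the domination condition you earlier attached to $P$; the bookkeeping of which Hardy piece yields which domination condition should be straightened out. You are also right that the technical core---showing that the integral formulas \eqref{chi2.13} and \eqref{chi2.3} really do produce the optimal targets, together with the reverse estimate on radial test functions---is where the substantial work lies; your proposal correctly flags this but does not carry it out, so as written it is a plan rather than a proof.
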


We will  use the following statement on the boundedness of the weighted Hardy operator
$$
H_{w} g(t):=\int_t^{\infty} g(s) w(s) ds,~ \ \  0<t<\infty,
$$
where $w$ is a weight.

The following theorem was proved in \cite{GulJMS2013} (see, also \cite{DGS}).

\begin{thm}\label{thm3.2.}
Let $v_1$, $v_2$ and $w$ be weights on  $(0,\infty)$ and $v_1(t)$ be bounded outside a neighborhood of the
origin. The inequality
\begin{equation} \label{vav01}
\ess_{t>0} v_2(t) H_{w} g(t) \leq C \ess_{t>0} v_1(t) g(t)
\end{equation}
holds for some $C>0$ for all non-negative and non-decreasing $g$ on $(0,\i)$ if and
only if
\begin{equation*}
B:= \sup _{t>0} v_2(t)\int_t^{\infty} \frac{w(s) ds}{\ess_{s<\tau<\infty} v_1(\tau)}<\infty.
\end{equation*}
Moreover, the value  $C=B$ is the best constant for  \eqref{vav01}.
\end{thm}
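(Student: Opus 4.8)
The plan is to organize everything around the auxiliary weight
$$
\widetilde{v}_1(s):=\ess_{s<\tau<\infty} v_1(\tau),
$$
which is exactly the quantity sitting in the denominator of $B$. First I would record its elementary properties, since they drive both directions of the proof. As $s$ grows the interval $(s,\infty)$ shrinks, so $\widetilde{v}_1$ is non-increasing and hence $1/\widetilde{v}_1$ is non-decreasing; one also has $v_1(s)\le \widetilde{v}_1(s)$ for a.e.\ $s>0$; and, crucially, the standing hypothesis that $v_1$ be bounded outside a neighbourhood of the origin guarantees $\widetilde{v}_1(s)<\infty$ for every $s>0$. This last point is precisely what will make the extremal test function below admissible.

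For the sufficiency ($C\le B$) the key observation is a monotone rearrangement estimate: for a non-negative non-decreasing $g$ and a.e.\ $s>0$, using $g(\tau)\ge g(s)$ when $\tau>s$,
$$
g(s)\,\widetilde{v}_1(s)=g(s)\ess_{s<\tau<\infty} v_1(\tau)\le \ess_{s<\tau<\infty} v_1(\tau)g(\tau)\le \ess_{\tau>0} v_1(\tau)g(\tau).
$$
Writing $M:=\ess_{\tau>0} v_1(\tau)g(\tau)$, this gives $g(s)\le M\,\widetilde{v}_1(s)^{-1}$, so that
$$
H_{w} g(t)=\int_t^{\infty} g(s)w(s)\,ds\le M\int_t^{\infty}\frac{w(s)}{\widetilde{v}_1(s)}\,ds.
$$
Multiplying by $v_2(t)$ and taking the essential supremum in $t$ yields \eqref{vav01} with constant $B$, hence $C\le B$.

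For the reverse inequality (sharpness, $B\le C$) I would substitute the extremal choice $g_0(s):=\widetilde{v}_1(s)^{-1}$, which is non-negative and non-decreasing by the monotonicity of $\widetilde{v}_1$, and finite for every $s>0$ by the boundedness hypothesis on $v_1$. Its right-hand side is $\ess_{\tau>0}\, v_1(\tau)/\widetilde{v}_1(\tau)\le 1$ because $v_1\le\widetilde{v}_1$ a.e., while its left-hand side is exactly $\ess_{t>0}\, v_2(t)\int_t^{\infty} w(s)\widetilde{v}_1(s)^{-1}\,ds$; thus, if \eqref{vav01} holds with constant $C$, we are forced to have $B\le C$, and combined with the sufficiency this identifies $C=B$ as the best constant.

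The hard part will be the final step. I must ensure that $g_0=1/\widetilde{v}_1$ is a genuine test function (this is where the hypothesis on $v_1$ enters, ruling out $\widetilde{v}_1\equiv\infty$), and, more delicately, that evaluating \eqref{vav01} on $g_0$ recovers the full $\sup_{t}$ defining $B$ rather than merely an essential supremum over $t$. Reconciling $\sup_t$ with $\ess_t$ requires exploiting that $t\mapsto\int_t^{\infty} w(s)\widetilde{v}_1(s)^{-1}\,ds$ is continuous and non-increasing, so that the product with $v_2$ cannot shed its supremum on a null set; once this is justified, the two directions combine to give both the characterization and the sharpness of the constant.
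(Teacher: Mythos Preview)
The paper does not supply its own proof of this theorem: it is quoted from \cite{GulJMS2013} (see also \cite{DGS}) and used as a black box, so there is no in-paper argument to compare against. Your approach via the auxiliary weight $\widetilde v_1(s)=\ess_{s<\tau<\infty}v_1(\tau)$ is the standard one and is essentially correct; both the sufficiency step (the pointwise bound $g(s)\le M/\widetilde v_1(s)$ for non-decreasing $g$) and the sharpness step (testing on $g_0=1/\widetilde v_1$) coincide with how the result is established in the cited sources.

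Two small points. First, boundedness of $v_1$ away from the origin does not force $\widetilde v_1(s)<\infty$ for \emph{every} $s>0$, only for $s$ beyond the exceptional neighbourhood; this is harmless, since $g_0$ then simply vanishes near $0$ and remains a legitimate non-negative non-decreasing test function that is not identically zero. Second, your proposed reconciliation of $\sup_t$ with $\ess_t$ via continuity of $t\mapsto\int_t^\infty w(s)\widetilde v_1(s)^{-1}\,ds$ does not work as written: continuity of one factor alone cannot prevent $v_2\cdot F$ from attaining a strictly larger pointwise supremum on a null set (take $v_2$ with a spike at a single point). This discrepancy is a wrinkle in the statement rather than a flaw in your argument; if one reads $B$ with an essential supremum, matching the left-hand side of \eqref{vav01}, the sharpness step goes through cleanly, and that is how the result is actually applied throughout the paper.
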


\begin{lem}\label{lemconCian}
Let $\Phi$ and $\Psi$ Young functions and $\Phi_{p}$, $p\in(1,\i]$, Young function defined as in \eqref{chi2.13}. If $\int_{0}^{1}\widetilde{\Phi}(t)/t^{1+p^{\prime}}dt<\infty$ and $\Phi_{p}$ dominates $\Psi$ globally then
\begin{equation}\label{framaxint}
\Phi^{-1}(r)\lesssim r^{\frac{1}{p}}\Psi^{-1}(r),\qquad \text{ for } r>0
\end{equation}
\end{lem}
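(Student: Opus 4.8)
The plan is to deduce \eqref{framaxint} from the weak-type boundedness of the Riesz potential furnished by Theorem \ref{bounFrMaxOrl}, by testing that boundedness on characteristic functions of balls. The first observation is that the two hypotheses of the lemma are, after a convenient choice of the ambient dimension, exactly the conditions \eqref{condweak}. Indeed, the asserted inequality involves only $p$ and is dimension-free, so I am free to work in $\Rn$ with $n=1$ and to put $\a=1/p\in(0,1)$; then $p=n/\a$ and $p'=n/(n-\a)$, whence $\Phi_p=\Phi_{n/\a}$ and the convergence hypothesis $\int_0^1\widetilde{\Phi}(t)/t^{1+p'}\,dt<\i$ becomes $\int_0^1\widetilde{\Phi}(t)/t^{1+n/(n-\a)}\,dt<\i$. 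Thus the pair of hypotheses ``$\int_0^1\widetilde{\Phi}(t)/t^{1+p'}dt<\i$ and $\Phi_p$ dominates $\Psi$ globally'' coincides with \eqref{condweak}, and Theorem \ref{bounFrMaxOrl}(i) guarantees that $I_{\a}$ is bounded from $L_{\Phi}(\Rn)$ to $WL_{\Psi}(\Rn)$.

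Next I would test this boundedness on $f=\chi_{_B}$ for a ball $B$ with $|B|=r^{-1}$. On one hand, Lemma \ref{lem4.0} gives $\|\chi_{_B}\|_{L_{\Phi}}=1/\Phi^{-1}(r)$. On the other hand, every point of $B$ lies within distance $2\,\mathrm{rad}(B)$ of any fixed $x\in B$, so estimating the integral defining $I_{\a}\chi_{_B}$ from below by replacing $|x-y|^{\a-n}$ with $(2\,\mathrm{rad}\,B)^{\a-n}$ yields $I_{\a}\chi_{_B}(x)\ge c\,(\mathrm{rad}\,B)^{\a}\thickapprox r^{-\a/n}=r^{-1/p}$ throughout $B$. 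Consequently $m\big(I_{\a}\chi_{_B},\,c'r^{-1/p}\big)\ge|B|=r^{-1}$, and feeding this distributional lower bound into the definition of the weak quasi-norm produces $\|I_{\a}\chi_{_B}\|_{WL_{\Psi}}\gtrsim r^{-1/p}/\Psi^{-1}(r)$. Combining this with the boundedness inequality $\|I_{\a}\chi_{_B}\|_{WL_{\Psi}}\le C\|\chi_{_B}\|_{L_{\Phi}}=C/\Phi^{-1}(r)$ and rearranging gives precisely $\Phi^{-1}(r)\lesssim r^{1/p}\Psi^{-1}(r)$.

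The step I expect to require the most care is the lower bound for $\|I_{\a}\chi_{_B}\|_{WL_{\Psi}}$: one must pass correctly from the distributional estimate to the quasi-norm, using $m(f/\lambda,t)=m(f,\lambda t)$ together with the defining infimum and the implication $\Psi(y)\le r\Rightarrow y\le\Psi^{-1}(r)$ coming from \eqref{younginverse}. A minor separate point is the endpoint $p=\i$, where $\a=0$ falls outside $(0,n)$; there \eqref{framaxint} reduces to $\Phi^{-1}(r)\lesssim\Psi^{-1}(r)$ and can be obtained by a limiting argument in $p$ or directly from the domination hypothesis. It is worth stressing that this scheme deliberately sidesteps any direct comparison of $\Phi_p$ with $\Phi$: since no $\Delta_2$ assumption is available, a naive truncation of the integrals defining $\mathcal{A}_p$ and $\Phi_p$ would fail to control the shifted arguments $\mathcal{A}_p^{-1}(s^{p'}/2)$, whereas the testing argument only uses the boundedness theorem as a black box.
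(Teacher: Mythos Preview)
Your argument for $p\in(1,\infty)$ is correct and takes a genuinely different route from the paper. The paper proves the lemma by quoting the pointwise inequality $1\le 2r^{-1/p'}\widetilde{\Phi}^{-1}(r)\Phi_p^{-1}(r)$ from \cite[p.~50]{Cianchi2}, combining it with the elementary consequence $\Phi_p^{-1}\le C\Psi^{-1}$ of global domination, and then invoking \eqref{2.3} to replace $\widetilde{\Phi}^{-1}(r)$ by $r/\Phi^{-1}(r)$. You instead realize the hypotheses as the weak-type condition \eqref{condweak} via the dimension choice $n=1$, $\a=1/p$, and then extract \eqref{framaxint} as a \emph{necessary} condition for the boundedness $I_\a:L_\Phi\to WL_\Psi$ by testing on $\chi_B$. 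This is conceptually clean and explains \emph{why} the inequality holds, at the price of invoking the full strength of Theorem~\ref{bounFrMaxOrl} as a black box, whereas the paper's argument is entirely elementary once the cited inequality is granted. (Your weak-norm lower bound can be shortened: since $I_\a\chi_B\ge c\,r^{-1/p}\chi_B$ pointwise, monotonicity and homogeneity of $\|\cdot\|_{WL_\Psi}$ together with Lemma~\ref{lem4.0} give $\|I_\a\chi_B\|_{WL_\Psi}\ge c\,r^{-1/p}\|\chi_B\|_{WL_\Psi}=c\,r^{-1/p}/\Psi^{-1}(r)$ directly.)

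The endpoint $p=\infty$, however, is not settled by either of your suggested fixes. ``Directly from the domination hypothesis'' does not suffice: the hypothesis is that $\Phi_\infty$ (not $\Phi$) dominates $\Psi$, and passing from $\Phi_\infty^{-1}\lesssim\Psi^{-1}$ to $\Phi^{-1}\lesssim\Psi^{-1}$ still requires comparing $\Phi$ with $\Phi_\infty$, precisely the step you set out to avoid. The limiting argument is also problematic: the implicit constant in your inequality is the operator norm of $I_{1/p}:L_\Phi(\mathbb R)\to WL_\Psi(\mathbb R)$, and there is no reason this remains bounded as $\a=1/p\to 0$. For $p=\infty$ one appears to need the paper's route through the Cianchi inequality and \eqref{2.3}. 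That said, the lemma is only applied downstream with $p=n/\a<\infty$, so this gap does not affect the subsequent results.
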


\begin{proof}
If $\int_{0}^{1}\widetilde{\Phi}(t)/t^{1+p^{\prime}}dt<\infty$ then
\begin{equation}\label{cianci2 3.26}
1\le 2r^{-\frac{1}{p^{\prime}}}\widetilde{\Phi}^{-1}(r)\Phi_{p}^{-1}(r),\qquad \text{ for } r>0.
\end{equation}
For the proof of this claim see \cite[p.~50]{Cianchi2}.

If $\Phi_{p}$ dominates $\Psi$ globally, then a positive constant $C$ exist such that
\begin{equation}\label{cianci1 2.19}
\Phi_{p}^{-1}(r)\le C\Psi^{-1}(r),\qquad \text{ for } r>0
\end{equation}
Indeed,
\begin{eqnarray*}
\Psi^{-1}(r)&=&\inf\{t\geq0: \Psi(t)>r\}\\
{}&\geq&\inf\{t\geq0: \Phi_{p}(Ct)>r\}\\
{}&=&\frac{1}{C}\inf\{Ct\geq0: \Phi_{p}(Ct)>r\}\\
{}&=&\frac{1}{C}\Phi_{p}^{-1}(r).
\end{eqnarray*}
Thus, \eqref{framaxint} follows from \eqref{cianci2 3.26}, \eqref{cianci1 2.19} and \eqref{2.3}.
\end{proof}

The following lemma is valid.
\begin{lem}\label{lemGultecnOrl}
Let $0<\a<n$, $\Phi$ and $\Psi$ Young functions, $f\in L_{\Phi}^{\rm loc}(\Rn)$ and $B=B(x_0,r)$. If $(\Phi, \Psi)$ satisfy the conditions \eqref{condstr}, then
\begin{equation}\label{strGultecnOrl}
\|I_\alpha f\|_{L_{\Psi}(B)} \lesssim \frac{1}{\Psi^{-1}\big(r^{-n}\big)}
 \int_{2r}^{\i} \|f\|_{L_{\Phi}(B(x_0,t))} \Psi^{-1}\big(t^{-n}\big) \frac{dt}{t}
\end{equation}
and if $(\Phi, \Psi)$ satisfy the conditions \eqref{condweak}, then
\begin{equation}\label{weakGultecnOrl}
\|I_\alpha f\|_{WL_{\Psi}(B)} \lesssim \frac{1}{\Psi^{-1}\big(r^{-n}\big)}
 \int_{2r}^{\i} \|f\|_{L_{\Phi}(B(x_0,t))} \Psi^{-1}\big(t^{-n}\big) \frac{dt}{t}
\end{equation}
\end{lem}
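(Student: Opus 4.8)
The plan is to split the input according to the ball on which the norm is measured, writing $f=f_1+f_2$ with $f_1=f\chi_{2B}$ and $f_2=f\chi_{\Rn\setminus 2B}$, where $2B=B(x_0,2r)$, and to estimate $I_\alpha f_1$ and $I_\alpha f_2$ separately; by sublinearity $\|I_\alpha f\|_{L_\Psi(B)}\le\|I_\alpha f_1\|_{L_\Psi(B)}+\|I_\alpha f_2\|_{L_\Psi(B)}$, and similarly for the weak norm. The point of the two hypotheses \eqref{condstr} and \eqref{condweak} is that each of them contains exactly the pair of conditions ($\int_0^1\widetilde{\Phi}(t)/t^{1+n/(n-\alpha)}dt<\infty$ together with ``$\Phi_{n/\alpha}$ dominates $\Psi$ globally'') needed to invoke both Cianchi's boundedness result (Theorem \ref{bounFrMaxOrl}) and the inverse-comparison Lemma \ref{lemconCian}, applied with the choice $p=n/\alpha$ so that $p'=n/(n-\alpha)$.

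For the near part $I_\alpha f_1$ I would use Theorem \ref{bounFrMaxOrl}: under \eqref{condstr} the operator $I_\alpha$ is bounded $L_\Phi\to L_\Psi$, so
\[
\|I_\alpha f_1\|_{L_\Psi(B)}\le\|I_\alpha f_1\|_{L_\Psi(\Rn)}\lesssim\|f_1\|_{L_\Phi(\Rn)}=\|f\|_{L_\Phi(2B)}.
\]
It then remains to dominate $\|f\|_{L_\Phi(2B)}$ by the right-hand side. Since $t\mapsto\|f\|_{L_\Phi(B(x_0,t))}$ is nondecreasing, on $[2r,4r]$ it is at least $\|f\|_{L_\Phi(2B)}$, and using the Young-inverse property $\Psi^{-1}((4r)^{-n})=\Psi^{-1}(4^{-n}r^{-n})\ge 4^{-n}\Psi^{-1}(r^{-n})$ quoted in the excerpt one gets
\[
\int_{2r}^{\i}\|f\|_{L_\Phi(B(x_0,t))}\Psi^{-1}(t^{-n})\frac{dt}{t}\gtrsim\|f\|_{L_\Phi(2B)}\,\Psi^{-1}(r^{-n}),
\]
which rearranges into the desired bound for $\|I_\alpha f_1\|_{L_\Psi(B)}$.

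For the far part I would exploit that for $x\in B$ and $y\notin 2B$ one has $|x-y|\ge\tfrac12|x_0-y|$, so $|I_\alpha f_2(x)|\lesssim\int_{\Rn\setminus 2B}|f(y)|\,|x_0-y|^{\alpha-n}dy$ uniformly in $x\in B$. Writing $|x_0-y|^{\alpha-n}\approx\int_{|x_0-y|}^{\i}t^{\alpha-n}\frac{dt}{t}$ and applying Fubini reduces this to $\int_{2r}^{\i}t^{\alpha-n}\|f\|_{L_1(B(x_0,t))}\frac{dt}{t}$; Lemma \ref{lemHold} converts the $L_1$ norm into $\lesssim t^{n}\Phi^{-1}(t^{-n})\|f\|_{L_\Phi(B(x_0,t))}$, leaving
\[
|I_\alpha f_2(x)|\lesssim\int_{2r}^{\i}t^{\alpha}\Phi^{-1}(t^{-n})\|f\|_{L_\Phi(B(x_0,t))}\frac{dt}{t}.
\]
The crucial step is now Lemma \ref{lemconCian}, which with $p=n/\alpha$ gives $\Phi^{-1}(s)\lesssim s^{\alpha/n}\Psi^{-1}(s)$; taking $s=t^{-n}$ yields $t^{\alpha}\Phi^{-1}(t^{-n})\lesssim\Psi^{-1}(t^{-n})$, so the far part is dominated by $\int_{2r}^{\i}\Psi^{-1}(t^{-n})\|f\|_{L_\Phi(B(x_0,t))}\frac{dt}{t}$, a bound independent of $x$. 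Since $\|\chi_{_B}\|_{L_\Psi}=1/\Psi^{-1}(|B|^{-1})\approx1/\Psi^{-1}(r^{-n})$ by Lemma \ref{lem4.0}, multiplying this pointwise bound by $\|\chi_{_B}\|_{L_\Psi}$ produces exactly \eqref{strGultecnOrl}.

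Finally, for the weak estimate \eqref{weakGultecnOrl} the far part is handled verbatim (a uniform pointwise bound controls the weak norm through $\|\chi_{_B}\|_{WL_\Psi}=\|\chi_{_B}\|_{L_\Psi}$, again by Lemma \ref{lem4.0}), while for the near part I replace the strong boundedness by the weak boundedness $I_\alpha:L_\Phi\to WL_\Psi$ from Theorem \ref{bounFrMaxOrl}(i), available precisely under \eqref{condweak}. I expect the main obstacle to be the far-part estimate: organizing the Fubini reduction so that Lemma \ref{lemHold} and the exponent bookkeeping fit cleanly, and then recognizing that the conversion $t^{\alpha}\Phi^{-1}(t^{-n})\lesssim\Psi^{-1}(t^{-n})$ is exactly what Lemma \ref{lemconCian} supplies with $p=n/\alpha$; everything else is routine normalization.
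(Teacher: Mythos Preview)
Your proposal is correct and follows essentially the same route as the paper: the same decomposition $f=f_1+f_2$ with $f_1=f\chi_{2B}$, the same appeal to Theorem \ref{bounFrMaxOrl} for the near part, and the same Fubini/Lemma \ref{lemHold}/Lemma \ref{lemconCian} chain (with $p=n/\alpha$) for the far part. The only cosmetic difference is in absorbing $\|f\|_{L_\Phi(2B)}$ into the right-hand side: you restrict the integral to $[2r,4r]$ and use $\Psi^{-1}((4r)^{-n})\ge 4^{-n}\Psi^{-1}(r^{-n})$, while the paper instead writes $\Psi^{-1}(r^{-n})\approx \Psi^{-1}(r^{-n})\,r^{n}\int_{2r}^{\infty}\frac{dt}{t^{n+1}}\lesssim\int_{2r}^{\infty}\Psi^{-1}(t^{-n})\frac{dt}{t}$; both arguments are equivalent.
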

\begin{proof}
Suppose that the conditions \eqref{condstr} satisfied. For arbitrary $x_0 \in\Rn$, set $B=B(x_0,r)$ for
the ball centered at $x_0$ and of radius $r$, $2B=B(x_0,2r)$.
We represent  $f$ as
\begin{equation*}
f=f_1+f_2, \ \quad f_1(y)=f(y)\chi _{2B}(y),\quad
 f_2(y)=f(y)\chi_{\dual {(2B)}}(y), \ \quad r>0,
\end{equation*}
and have
$$
\|I_\alpha f\|_{L_{\Psi}(B)} \le \|I_\alpha f_1\|_{L_{\Psi}(B)} +\|I_\alpha f_2\|_{L_{\Psi}(B)}.
$$

Since $f_1\in L_{\Phi}(\Rn)$, $I_\alpha f_1\in L_{\Psi}(\Rn)$  and from the
boundedness of $I_\alpha$ from $L_{\Phi}(\Rn)$ to $L_{\Psi}(\Rn)$ (see Theorem \ref{bounFrMaxOrl}) it follows that:
\begin{equation*}
\|I_\alpha f_1\|_{L_\Psi(B)}\leq \|I_\alpha f_1\|_{L_\Psi(\Rn)}\leq
C\|f_1\|_{L_\Phi(\Rn)}=C\|f\|_{L_\Phi(2B)},
\end{equation*}
where constant $C >0$ is independent of $f$.

It's clear that $x\in B$, $y\in \dual{(2B)}$ implies
$\frac{1}{2}|x_0-y|\le |x-y|\le\frac{3}{2}|x_0-y|$. We get
$$
|I_\alpha f_2(x)|\leq 2^{n-\alpha} \int_{\dual {(2B)}}\frac{|f(y)|}{|x_0-y|^{n-\alpha}}dy.
$$
By Fubini's theorem we have
\begin{equation*}
\begin{split}
\int_{\dual {(2B)}}\frac{|f(y)|}{|x_0-y|^{n-\alpha}}dy &
\thickapprox
\int_{\dual {(2B)}}|f(y)|\int_{|x_0-y|}^{\i}\frac{dt}{t^{n+1-\alpha}}dy
\\
&\thickapprox \int_{2r}^{\i}\int_{2r\leq |x_0-y|< t}|f(y)|dy\frac{dt}{t^{n+1-\alpha}}
\\
&\lesssim \int_{2r}^{\i}\int_{B(x_0,t) }|f(y)|dy\frac{dt}{t^{n+1-\alpha}}.
\end{split}
\end{equation*}
By Lemma \ref{lemHold} and Lemma \ref{lemconCian} for $p=n/\a$ we get
\begin{eqnarray} \label{sal00}
\int_{\dual {(2B)}}\frac{|f(y)|}{|x_0-y|^{n}}dy & \lesssim & \int_{2r}^{\i}\|f\|_{L_{\Phi}(B(x_0,t))} \Phi^{-1}\big(t^{-n}\big) t^{\alpha-1} dt\nonumber\\
{}& \lesssim & \int_{2r}^{\i}\|f\|_{L_{\Phi}(B(x_0,t))} \Psi^{-1}\big(t^{-n}\big) \frac{dt}{t}
\end{eqnarray}

Moreover,
\begin{equation} \label{ves2K}
\|I_\alpha f_2\|_{L_{\Psi}(B)}\lesssim
\frac{1}{\Psi^{-1}\big(r^{-n}\big)}\int_{2r}^{\i}\|f\|_{L_{\Phi}(B(x_0,t))} \Psi^{-1}\big(t^{-n}\big) \frac{dt}{t}
\end{equation}
is valid. Thus
\begin{equation*}
\|I_\alpha f\|_{L_\Psi(B)}\lesssim \|f\|_{L_{\Phi}(2B)}+
\frac{1}{\Psi^{-1}\big(r^{-n}\big)}\int_{2r}^{\i}\|f\|_{L_\Phi (B(x_0,t))} \Psi^{-1}\big(t^{-n}\big) \frac{dt}{t}.
\end{equation*}

On the other hand, using the property of Young function as it mentioned in \eqref{2.3}
\begin{eqnarray*}
\Psi^{-1}\big(r^{-n}\big) & \thickapprox & \Psi^{-1}\big(r^{-n}\big) r^{n} \int_{2r}^{\i}\frac{dt}{t^{n+1}}
\\
& \lesssim & \int_{2r}^{\i} \Psi^{-1}\big(t^{-n}\big) \frac{dt}{t}
\end{eqnarray*}
and we get
\begin{equation} \label{ves2F}
\|f\|_{L_{\Phi}(2B)}\lesssim
\frac{1}{\Psi^{-1}\big(r^{-n}\big)}\int_{2r}^{\i} \|f\|_{L_{\Phi}(B(x_0,t))} \Psi^{-1}\big(t^{-n}\big) \frac{dt}{t}.
\end{equation}

Thus
\begin{equation*}
\|I_\alpha f\|_{L_{\Psi}(B)}\lesssim \frac{1}{\Psi^{-1}\big(r^{-n}\big)} \int_{2r}^{\i}\|f\|_{L_\Phi (B(x_0,t))} \Psi^{-1}\big(t^{-n}\big) \frac{dt}{t}.
\end{equation*}

Suppose that the conditions \eqref{condstr} satisfied. From the boundedness of $I_{\alpha}$ from $L_{\Phi}(\Rn)$ to $WL_{\Psi}(\Rn)$ (see Theorem \ref{bounFrMaxOrl}) and \eqref{ves2F} it follows
that:
\begin{equation} \label{ggg1}
\begin{split}
\|I_{\alpha}f_1\|_{WL_\Psi(B)} & \leq \|I_{\alpha}f_1\|_{WL_\Psi(\Rn)} \lesssim  \|f_1\|_{L_\Phi(\Rn)}
\\
& = \|f\|_{L_\Phi(2B)} \lesssim  \frac{1}{\Psi^{-1}\big(r^{-n}\big)}\int_{2r}^{\i} \|f\|_{L_{\Phi}(B(x_0,t))} \Psi^{-1}\big(t^{-n}\big) \frac{dt}{t}.
\end{split}
\end{equation}
Then by \eqref{ves2K} and \eqref{ggg1} we get the inequality \eqref{weakGultecnOrl}.
\end{proof}

\begin{thm}\label{thm4.4.}
Let $0<\a<n$ and the functions $(\varphi_1,\varphi_2)$ and $(\Phi, \Psi)$ satisfy the condition
\begin{equation}\label{eq3.6.VZfrMax}
\int_{r}^{\i} \es_{t<s<\i}\frac{\varphi_1(x,s)}{\Phi^{-1}\big(s^{-n}\big)}\Psi^{-1}\big(t^{-n}\big)\frac{dt}{t}  \le C \, \varphi_2(x,r),
\end{equation}
where $C$ does not depend on $x$ and $r$.
Then for the conditions \eqref{condstr}, $I_{\a}$ is bounded from $M_{\Phi,\varphi_1}(\Rn)$ to $M_{\Psi,\varphi_2}(\Rn)$
and for the conditions \eqref{condweak}, $I_{\a}$ is bounded from $M_{\Phi,\varphi_1}(\Rn)$ to $WM_{\Psi,\varphi_2}(\Rn)$.
\end{thm}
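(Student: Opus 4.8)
The plan is to deduce the theorem from the local (pointwise-in-scale) estimate of Lemma~\ref{lemGultecnOrl} together with the weighted Hardy inequality of Theorem~\ref{thm3.2.}, which is the standard route for estimates of this Guliyev type. First I would unwind the definition of the target norm. Since $|B(x,r)|=v_n r^n$ and $\Psi^{-1}$ obeys the scaling inequalities for Young functions recorded just before Theorem~\ref{HolderOr}, one has $\Psi^{-1}(|B(x,r)|^{-1})\thickapprox\Psi^{-1}(r^{-n})$ with constants independent of $x$ and $r$, so that
\begin{equation*}
\|I_\a f\|_{M_{\Psi,\varphi_2}}\thickapprox\sup_{x\in\Rn,\,r>0}\varphi_2(x,r)^{-1}\,\Psi^{-1}(r^{-n})\,\|I_\a f\|_{L_\Psi(B(x,r))}.
\end{equation*}
Inserting \eqref{strGultecnOrl} and cancelling $\Psi^{-1}(r^{-n})$ against the factor $1/\Psi^{-1}(r^{-n})$ produced by the lemma reduces the problem, for each fixed $x$, to estimating
\begin{equation*}
\sup_{r>0}\varphi_2(x,r)^{-1}\int_{2r}^{\i}g(t)\,\Psi^{-1}(t^{-n})\,\frac{dt}{t},\qquad g(t):=\|f\|_{L_\Phi(B(x,t))};
\end{equation*}
as the integrand is nonnegative, $\int_{2r}^{\i}$ may be enlarged to $\int_{r}^{\i}$ at the cost of a harmless constant.

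The decisive observation is that $g(t)=\|f\|_{L_\Phi(B(x,t))}$ is nondecreasing in $t$, since $B(x,t_1)\subset B(x,t_2)$ when $t_1<t_2$. This puts $g$ into the exact class covered by Theorem~\ref{thm3.2.}, which I would apply, for each fixed $x$, with the weights
\begin{equation*}
v_2(t)=\varphi_2(x,t)^{-1},\qquad w(t)=\frac{\Psi^{-1}(t^{-n})}{t},\qquad v_1(t)=\frac{\Phi^{-1}(t^{-n})}{\varphi_1(x,t)}.
\end{equation*}
With these choices the left-hand side of \eqref{vav01} is precisely the quantity just displayed (with lower limit $r$), while its right-hand side is
\begin{equation*}
\ess_{t>0}v_1(t)\,g(t)=\ess_{t>0}\frac{\Phi^{-1}(t^{-n})}{\varphi_1(x,t)}\,\|f\|_{L_\Phi(B(x,t))}\le\|f\|_{M_{\Phi,\varphi_1}}.
\end{equation*}

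It remains to verify that the sharp constant $B$ supplied by Theorem~\ref{thm3.2.} is dominated by the hypothesis. Because $v_1$ is strictly positive, passing to reciprocals turns the essential supremum in the denominator of $B$ into an essential infimum,
\begin{equation*}
\frac{1}{\ess_{t<s<\i}v_1(s)}=\es_{t<s<\i}\frac{\varphi_1(x,s)}{\Phi^{-1}(s^{-n})},
\end{equation*}
so that
\begin{equation*}
B=\sup_{r>0}\varphi_2(x,r)^{-1}\int_r^{\i}\Big(\es_{t<s<\i}\frac{\varphi_1(x,s)}{\Phi^{-1}(s^{-n})}\Big)\Psi^{-1}(t^{-n})\,\frac{dt}{t}.
\end{equation*}
This is exactly the left-hand side of the assumed condition \eqref{eq3.6.VZfrMax} divided by $\varphi_2(x,r)$, whence $B\le C$ uniformly in $x$. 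Chaining the three displays and taking the supremum over $x$ yields $\|I_\a f\|_{M_{\Psi,\varphi_2}}\lesssim\|f\|_{M_{\Phi,\varphi_1}}$.

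For the weak conclusion I would run the same argument verbatim, replacing \eqref{strGultecnOrl} by the weak local estimate \eqref{weakGultecnOrl}; the Hardy-operator step is identical. The genuinely delicate points are not the algebra but rather: the uniform comparability $\Psi^{-1}(|B(x,r)|^{-1})\thickapprox\Psi^{-1}(r^{-n})$; the verification that $v_1(t)=\Phi^{-1}(t^{-n})/\varphi_1(x,t)$ is bounded outside a neighborhood of the origin, as required by Theorem~\ref{thm3.2.} (this holds as soon as $\varphi_1(x,\cdot)$ stays bounded away from $0$ on $[\varepsilon,\i)$, and one may reduce to this case); and above all the correct handling of the nested essential supremum/infimum, which is precisely what makes the Hardy constant $B$ coincide with the integral condition \eqref{eq3.6.VZfrMax}. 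This last identity is the crux of the reduction.
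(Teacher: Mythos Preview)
Your proposal is correct and follows precisely the paper's route: invoke Lemma~\ref{lemGultecnOrl} for the local estimate and then Theorem~\ref{thm3.2.} with the weights $v_2=\varphi_2^{-1}$, $w(t)=\Psi^{-1}(t^{-n})/t$, $v_1=\Phi^{-1}(t^{-n})/\varphi_1$, reading off the constant $B$ from \eqref{eq3.6.VZfrMax}. The paper compresses this into a single two-line display, while you spell out the monotonicity of $g$, the $\Psi^{-1}(|B|^{-1})\thickapprox\Psi^{-1}(r^{-n})$ comparison, and the $\ess/\es$ reciprocity that matches $B$ to the hypothesis, but the argument is the same.
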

\begin{proof}
By Lemma \ref{lemGultecnOrl} and Theorem \ref{thm3.2.} we get
\begin{equation*}
\begin{split}
\|I_{\a} f\|_{M_{\Psi,\varphi_2}} & \lesssim \sup\limits_{x\in\Rn, r>0}
\varphi_2(x,r)^{-1}\,
\int_{r}^{\i} \Psi^{-1}\big(t^{-n}\big) \|f\|_{L_{\Phi}(B(x,t))}\frac{dt}{t}
\\
& \lesssim \sup\limits_{x\in\Rn, r>0}
\varphi_1(x,r)^{-1} \Phi^{-1}\big(r^{-n}\big) \|f\|_{L_{\Phi}(B(x,r))}
\\
& = \|f\|_{M_{\Phi,\varphi_1}},
\end{split}
\end{equation*}
if \eqref{condstr} satisfied and
\begin{equation*}
\begin{split}
\|I_{\a} f\|_{WM_{\Psi,\varphi_2}} & \lesssim \sup\limits_{x\in\Rn, r>0}
\varphi_2(x,r)^{-1}\,
\int_{r}^{\i} \Psi^{-1}\big(t^{-n}\big) \|f\|_{L_{\Phi}(B(x,t))}\frac{dt}{t}
\\
& \lesssim \sup\limits_{x\in\Rn, r>0}
\varphi_1(x,r)^{-1} \Phi^{-1}\big(r^{-n}\big) \|f\|_{L_{\Phi}(B(x,r))}
\\
& = \|f\|_{M_{\Phi,\varphi_1}},
\end{split}
\end{equation*}
if \eqref{condweak} satisfied.
\end{proof}
\begin{rem}
Recall that, for $0<\a<n$,
$$
M_{\a}f(x)\le v_{n}^{\frac{\a}{n}-1}I_{\a}(|f|)(x),
$$
hence Theorem \ref{thm4.4.} imply the boundedness of the fractional maximal operator $M_{\a}$ from $M_{\Phi,\varphi_1}(\Rn)$ to $M_{\Psi,\varphi_2}(\Rn)$ and from $M_{\Phi,\varphi_1}(\Rn)$ to $WM_{\Psi,\varphi_2}(\Rn)$.
\end{rem}
If we take $\Phi(t)=t^{p},\,\Psi(t)=t^{q},\,1\le p,q<\i$ at Theorem \ref{thm4.4.} we get following corollary which was proved in \cite{GULAKShIEOT2012} and containing results obtained in \cite{GulDoc, GulBook, GulJIA, Miz, Nakai}.
\begin{cor}\label{bounRszGenMor}
Let $0<\a<n$, $1\le p < \frac{n}{\a}$, $\frac{1}{q}=\frac{1}{p}-\frac{\a}{n}$ and $(\varphi_{1},\varphi_{2})$ satisfy the condition
\begin{equation*}
\int_{r}^{\i}\frac{\es_{t<s<\i}\varphi_{1}(x,s)s^{\frac{n}{p}}}{t^{\frac{n}{q}+1}}dt\le C\varphi_{2}(x,r),
\end{equation*}
where $C$ does not depend on $x$ and $r$. Then $I_{\alpha}$ is bounded from $M_{p,\varphi_{1}}$ to $M_{q,\varphi_{2}}$ for $p>1$ and from $M_{1,\varphi_{1}}$ to $WM_{q,\varphi_{2}}$ for $p=1$.
\end{cor}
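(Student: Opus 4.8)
The plan is to derive Corollary~\ref{bounRszGenMor} directly from Theorem~\ref{thm4.4.} by specializing to the power Young functions $\Phi(t)=t^{p}$ and $\Psi(t)=t^{q}$; the whole argument then splits into verifying the relevant Cianchi condition for this pair and checking that the abstract hypothesis \eqref{eq3.6.VZfrMax} collapses to the displayed integral inequality.

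First I would record the elementary identities. Since $\Phi^{-1}(s)=s^{1/p}$ and $\Psi^{-1}(s)=s^{1/q}$, we have $\Phi^{-1}(s^{-n})=s^{-n/p}$ and $\Psi^{-1}(t^{-n})=t^{-n/q}$; moreover, by the identifications noted in Section~\ref{secorlmor}, $M_{\Phi,\varphi}=M_{p,\varphi}$ and $M_{\Psi,\varphi}=M_{q,\varphi}$, and likewise for the weak spaces (the constant $v_{n}$ in $|B(x,r)|=v_{n}r^{n}$ is harmless, being absorbed into $\lesssim$). Note also that $1\le p<n/\a$ guarantees $q=np/(n-\a p)$ is finite with $q>1$, so all the space identifications are legitimate.

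Next I would check the Cianchi hypotheses. Rather than computing the auxiliary Young functions $\Phi_{n/\a}$ and $\Psi_{n/\a}$ from \eqref{chi2.13} and \eqref{chi2.3}, I would exploit the \emph{equivalence} furnished by Theorem~\ref{bounFrMaxOrl}: for $p>1$ with $1/q=1/p-\a/n$ the classical Hardy--Littlewood--Sobolev theorem gives $I_{\a}\colon L_{p}(\Rn)\to L_{q}(\Rn)$, which by Theorem~\ref{bounFrMaxOrl}(ii) is equivalent to the pair $(t^{p},t^{q})$ satisfying \eqref{condstr}; for $p=1$ with $q=n/(n-\a)$ the weak-type Hardy--Littlewood--Sobolev theorem gives $I_{\a}\colon L_{1}(\Rn)\to WL_{q}(\Rn)$, which by Theorem~\ref{bounFrMaxOrl}(i) is equivalent to $(t,t^{q})$ satisfying \eqref{condweak}. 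As a consistency check, in the strong case the integrability conditions $\int_{0}^{1}\widetilde\Phi(t)\,t^{-1-n/(n-\a)}\,dt<\i$ and $\int_{0}^{1}\Psi(t)\,t^{-1-n/(n-\a)}\,dt<\i$ reduce, for the power functions, precisely to $p<n/\a$ and $p>1$ respectively.

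Finally I would substitute the power values into \eqref{eq3.6.VZfrMax}. Using $1/\Phi^{-1}(s^{-n})=s^{n/p}$ and $\Psi^{-1}(t^{-n})=t^{-n/q}$, the left-hand side becomes
$$
\int_{r}^{\i}\Big(\es_{t<s<\i}\varphi_{1}(x,s)\,s^{n/p}\Big)\,t^{-n/q}\,\frac{dt}{t}=\int_{r}^{\i}\frac{\es_{t<s<\i}\varphi_{1}(x,s)\,s^{n/p}}{t^{n/q+1}}\,dt,
$$
which is exactly the hypothesis of the corollary. Hence Theorem~\ref{thm4.4.} applies and yields boundedness $M_{p,\varphi_{1}}\to M_{q,\varphi_{2}}$ for $p>1$ (via \eqref{condstr}) and $M_{1,\varphi_{1}}\to WM_{q,\varphi_{2}}$ for $p=1$ (via \eqref{condweak}). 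The only step demanding care is the verification of the Cianchi conditions for the power pair: carrying this out by an explicit computation of $\Phi_{n/\a}$, $\Psi_{n/\a}$ would be the main obstacle, but it is circumvented cleanly by invoking the classical Hardy--Littlewood--Sobolev estimates together with the equivalence in Theorem~\ref{bounFrMaxOrl}; everything else is a routine substitution.
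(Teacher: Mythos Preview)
Your proposal is correct and follows exactly the route the paper indicates: the corollary is stated immediately after the sentence ``If we take $\Phi(t)=t^{p},\,\Psi(t)=t^{q}$ at Theorem~\ref{thm4.4.} we get following corollary\ldots'', so the paper's own proof is just this specialization, and your write-up simply fills in the details (the identification of the spaces, the reduction of \eqref{eq3.6.VZfrMax} to the displayed integral condition, and the verification of \eqref{condstr}/\eqref{condweak} via the equivalence in Theorem~\ref{bounFrMaxOrl} and classical Hardy--Littlewood--Sobolev). Your use of the ``if and only if'' in Theorem~\ref{bounFrMaxOrl} to sidestep an explicit computation of $\Phi_{n/\alpha}$, $\Psi_{n/\alpha}$ is a nice touch that the paper does not spell out but is entirely in the same spirit.
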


In the case $\varphi_1(x,r)=\frac{\Phi^{-1}\big(r^{-n}\big)}{\Phi^{-1}\big(r^{-\lambda_1}\big)}$, $\varphi_2(x,r)=\frac{\Psi^{-1}\big(r^{-n}\big)}{\Psi^{-1}\big(r^{-\lambda_2}\big)}$  from Theorem \ref{thm4.4.} we get the following Spanne type theorem for the boundedness of the Riesz potential on Orlicz-Morrey spaces.
\begin{cor} \label{ggffdd}
Let $0<\a<n$, $\Phi$ and $\Psi$ be Young functions, $0 \le \lambda_1, \lambda_2 < n$ and $(\Phi, \Psi)$ satisfy the condition
\begin{equation*}
\int_{r}^{\i} \frac{\Psi^{-1}\big(t^{-n}\big)}{\Phi^{-1}\big(t^{-\lambda_1}\big)}\frac{dt}{t}  \le C \, \frac{\Psi^{-1}\big(r^{-n}\big)}{\Psi^{-1}\big(r^{-\lambda_2}\big)},
\end{equation*}
where $C$ does not depend on $r$.
Then for the conditions \eqref{condstr}, $I_{\a}$ is bounded from $M_{\Phi,\lambda_1}(\Rn)$ to $M_{\Psi,\lambda_2}(\Rn)$
and for the conditions \eqref{condweak}, $I_{\a}$ is bounded from $M_{\Phi,\lambda_1}(\Rn)$ to $WM_{\Psi,\lambda_2}(\Rn)$.
\end{cor}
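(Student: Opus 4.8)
The plan is to deduce Corollary \ref{ggffdd} as a direct specialization of Theorem \ref{thm4.4.}. Recall from Section \ref{secorlmor} that the Orlicz--Morrey space $M_{\Phi,\lambda}$ is exactly the generalized space $M_{\Phi,\varphi}$ associated with the weight $\varphi(x,r)=\Phi^{-1}\big(r^{-n}\big)/\Phi^{-1}\big(r^{-\lambda}\big)$, and likewise for the weak scale. Accordingly I would make the substitution
$$
\varphi_1(x,r)=\frac{\Phi^{-1}\big(r^{-n}\big)}{\Phi^{-1}\big(r^{-\lambda_1}\big)},
\qquad
\varphi_2(x,r)=\frac{\Psi^{-1}\big(r^{-n}\big)}{\Psi^{-1}\big(r^{-\lambda_2}\big)},
$$
so that $M_{\Phi,\varphi_1}=M_{\Phi,\lambda_1}$, $M_{\Psi,\varphi_2}=M_{\Psi,\lambda_2}$ and $WM_{\Psi,\varphi_2}=WM_{\Psi,\lambda_2}$. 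With this identification it suffices to verify that the abstract condition \eqref{eq3.6.VZfrMax} of Theorem \ref{thm4.4.} collapses to the integral inequality stated in the corollary.

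The first step is purely algebraic: the factor $\Phi^{-1}\big(s^{-n}\big)$ cancels, giving
$$
\frac{\varphi_1(x,s)}{\Phi^{-1}\big(s^{-n}\big)}=\frac{1}{\Phi^{-1}\big(s^{-\lambda_1}\big)} .
$$
The second step is to discard the essential infimum, and this is the only point that needs a genuine (if small) argument. Since $0\le\lambda_1<n$, the map $s\mapsto s^{-\lambda_1}$ is non-increasing on $(0,\i)$ and $\Phi^{-1}$ is non-decreasing, so $s\mapsto 1/\Phi^{-1}\big(s^{-\lambda_1}\big)$ is non-decreasing. A non-decreasing function has at most countably many discontinuities, so its essential infimum over $(t,\i)$ coincides with its value at the left endpoint for all but countably many $t$:
$$
\es_{t<s<\i}\frac{1}{\Phi^{-1}\big(s^{-\lambda_1}\big)}=\frac{1}{\Phi^{-1}\big(t^{-\lambda_1}\big)}
\qquad\text{for a.e. } t>0 .
$$

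Consequently the left-hand side of \eqref{eq3.6.VZfrMax} equals $\int_r^\i \Psi^{-1}\big(t^{-n}\big)/\Phi^{-1}\big(t^{-\lambda_1}\big)\,dt/t$, an expression free of $x$, and hence \eqref{eq3.6.VZfrMax} is precisely the hypothesis of Corollary \ref{ggffdd}. It then remains only to invoke Theorem \ref{thm4.4.}: under \eqref{condstr} it delivers the boundedness of $I_\a$ from $M_{\Phi,\lambda_1}$ to $M_{\Psi,\lambda_2}$, and under \eqref{condweak} the boundedness from $M_{\Phi,\lambda_1}$ to $WM_{\Psi,\lambda_2}$. I expect the only real (and still minor) obstacle to be the monotonicity and almost-everywhere justification for removing the essential infimum; the rest is a substitution into the master theorem.
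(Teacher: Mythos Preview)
Your proposal is correct and follows exactly the paper's approach: the paper states the corollary as an immediate specialization of Theorem \ref{thm4.4.} under the choices $\varphi_1(x,r)=\Phi^{-1}(r^{-n})/\Phi^{-1}(r^{-\lambda_1})$ and $\varphi_2(x,r)=\Psi^{-1}(r^{-n})/\Psi^{-1}(r^{-\lambda_2})$, without writing out a separate proof. Your extra justification that $s\mapsto 1/\Phi^{-1}(s^{-\lambda_1})$ is non-decreasing (so the essential infimum in \eqref{eq3.6.VZfrMax} reduces to the value at $t$ a.e.) is a detail the paper leaves implicit.
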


\begin{rem}
If we take $\Phi(t)=t^{p},\,\Psi(t)=t^{q},\,1\le p,q<\i$ at Corollary \ref{ggffdd} we get Spanne type boundedness of $I_{\a}$, i.e. if $0<\a<n$, $1<p<\frac{n}{\a}$, $0<\lambda<n-\a p$, $\frac{1}{p}-\frac{1}{q}=\frac{\a}{n}$ and $\frac{\lambda}{p}=\frac{\mu}{q}$, then for $p>1$ the Riesz potential $I_{\a}$ is bounded from $M_{p,\lambda}(\Rn)$ to $M_{q,\mu}(\Rn)$ and for $p=1$, $I_{\a}$ is bounded from $M_{1,\lambda}(\Rn)$ to $WM_{q,\mu}(\Rn)$.
\end{rem}


\section{Commutators of Riesz potential in the spaces $M_{\Phi,\varphi}$}

For a function $b\in L_1^{\rm loc}(\Rn)$, let $M_{b}$ be the corresponding multiplication operator defined by $M_{b}f=bf$ for measurable function $f$. Let $T$ be the classical Calder\'{o}n-Zygmund singular integral operator, then the commutator between $T$ and $M_{b}$ is denoted by $[b,T]:=M_{b}T-TM_{b}$. A famous theorem of Coifman et al. \cite{CRW} gave
a characterization of $L_p$-boundedness of $[b,T]$ when T are the Riesz transforms $R_j$ $(j = 1, \dots ,n)$. Using this characterization, the authors of \cite{CRW} got a decomposition
theorem of the real Hardy spaces. The boundedness result was generalized to other contexts and important applications to some non-linear PDEs were given by Coifman
et al. \cite{CLMS}.

We recall the definition of the space of $BMO(\Rn)$.

\begin{defn}
Suppose that $f\in L_1^{\rm loc}(\Rn)$, let
\begin{equation*}
\|f\|_\ast=\sup_{x\in\Rn, r>0}\frac{1}{|B(x,r)|}
\int_{B(x,r)}|f(y)-f_{B(x,r)}|dy<\infty,
\end{equation*}
where
$$
f_{B(x,r)}=\frac{1}{|B(x,r)|} \int_{B(x,r)} f(y)dy.
$$
Define
$$
BMO(\Rn)=\{ f\in L_1^{\rm loc}(\Rn) ~ : ~ \| f \|_{\ast} < \infty  \}.
$$
\end{defn}

Modulo constants, the space $BMO(\Rn)$ is a Banach space with respect to the norm $\| \cdot \|_{\ast}$.

\begin{rem} \label{rem2.4.}
$(1)~~$ The John--Nirenberg inequality:
there are constants $C_1$, $C_2>0$, such that for all $f \in BMO(\Rn)$ and $\beta>0$
$$
\left| \left\{ x \in B \, : \, |f(x)-f_{B}|>\beta \right\}\right|
\le C_1 |B| e^{-C_2 \beta/\| f \|_{\ast}}, ~~~ \forall B \subset \Rn.
$$

$(2)~~$ The John--Nirenberg inequality implies that
\begin{equation} \label{lem2.4.}
\|f\|_\ast \thickapprox \sup_{x\in\Rn, r>0}\left(\frac{1}{|B(x,r)|}
\int_{B(x,r)}|f(y)-f_{B(x,r)}|^p dy\right)^{\frac{1}{p}}
\end{equation}
for $1<p<\infty$.

$(3)~~$ Let $f\in BMO(\Rn)$. Then there is a constant $C>0$ such that
\begin{equation} \label{propBMO}
\left|f_{B(x,r)}-f_{B(x,t)}\right| \le C \|f\|_\ast \ln \frac{t}{r} \;\;\; \mbox{for} \;\;\; 0<2r<t,
\end{equation}
where $C$ is independent of $f$, $x$, $r$ and $t$.
\end{rem}

\begin{defn}
A Young function $\Phi$ is said to be of upper type p (resp. lower type p) for some $p\in[0,\i)$, if there exists a positive constant $C$ such that, for all $t\in[1,\i)$(resp. $t\in[0,1]$) and $s\in[0,\i)$,
$$
\Phi(st)\le Ct^p\Phi(s).
$$
\end{defn}

\begin{rem}\label{remlowup}
We know that if $\Phi$ is lower type $p_0$ and upper type $p_1$ with $1<p_0\le p_1<\i$, then $\Phi\in \Delta_2\cap\nabla_2$. Conversely if $\Phi\in \Delta_2\cap\nabla_2$, then  $\Phi$ is lower type $p_0$ and upper type $p_1$ with $1<p_0\le p_1<\i$ (see \cite{KokKrbec}).
\end{rem}

\begin{lem}\cite{Ky1}\label{Kylowupp}
Let $\Phi$ be a Young function which is lower type $p_0$ and upper type $p_1$ with $1\le p_0\le p_1<\i$. Let $\widetilde{C}$ be a positive constant. Then there exists a positive constant $C$ such that for any ball $B$ of $\Rn$ and $\mu\in(0,\i)$
$$\int_{B}\Phi\left(\frac{|f(x)|}{\mu}\right)dx\le \widetilde{C}$$
implies that $\|f\|_{L_\Phi(B)}\le C\mu$.
\end{lem}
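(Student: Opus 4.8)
The plan is to unwind the definition of the Luxemburg norm and exhibit an explicit admissible value of the defining infimum. Recall that
$$
\|f\|_{L_\Phi(B)}=\inf\Big\{\lambda>0:\int_B\Phi\big(|f(x)|/\lambda\big)\,dx\le 1\Big\},
$$
so it suffices to produce a constant $C$, depending only on $\widetilde{C}$, $p_0$ and the lower-type constant of $\Phi$ (and in particular independent of $B$, $f$ and $\mu$), for which the hypothesis $\int_B\Phi(|f(x)|/\mu)\,dx\le\widetilde{C}$ forces $\int_B\Phi\big(|f(x)|/(C\mu)\big)\,dx\le 1$. Granting such a $C$, the value $\lambda=C\mu$ is admissible in the infimum and the desired bound $\|f\|_{L_\Phi(B)}\le C\mu$ follows at once.

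First I would fix a constant $C\ge 1$, to be specified, and rewrite $\Phi\big(|f(x)|/(C\mu)\big)=\Phi\big(\tfrac1C\cdot|f(x)|/\mu\big)$. Since $C\ge1$, the scaling factor $t=1/C$ lies in $[0,1]$, so the lower-type hypothesis applies with $s=|f(x)|/\mu$, giving $\Phi\big(\tfrac1C\cdot|f(x)|/\mu\big)\le C_0\,C^{-p_0}\,\Phi\big(|f(x)|/\mu\big)$, where $C_0$ denotes the lower-type constant. Integrating over $B$ and invoking the hypothesis yields $\int_B\Phi\big(|f(x)|/(C\mu)\big)\,dx\le C_0\,C^{-p_0}\,\widetilde{C}$. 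It then suffices to require $C_0\,C^{-p_0}\,\widetilde{C}\le 1$, i.e. $C\ge(C_0\widetilde{C})^{1/p_0}$, so the choice $C=\max\{1,(C_0\widetilde{C})^{1/p_0}\}$ does the job.

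The only point requiring any care is the range of validity of the lower-type inequality: it holds precisely for scaling factors in $[0,1]$, which is exactly why one must insist on $C\ge1$ and take the maximum with $1$ in the final choice (the borderline case $\widetilde{C}\le 1$ being in any event immediate, since then $\int_B\Phi(|f|/\mu)\,dx\le 1$ directly). No genuine difficulty arises. I would also remark that this implication uses \emph{only} the lower-type property of $\Phi$; the upper-type hypothesis plays no role here and is presumably listed merely to match the assumptions under which the lemma is later applied.
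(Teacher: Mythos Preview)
Your proof is correct. The paper does not give its own proof of this lemma; it is simply quoted from \cite{Ky1}, so there is nothing to compare against at the level of argument. Your approach is the standard and natural one: exhibit an admissible $\lambda=C\mu$ in the Luxemburg infimum by scaling down via the lower-type inequality.

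Your closing remark is accurate and can even be strengthened: not only is the upper-type hypothesis unused, but the paper itself records (just after \eqref{2.3}) that every Young function satisfies $\Phi(\alpha t)\le\alpha\Phi(t)$ for $0\le\alpha\le1$ by convexity and $\Phi(0)=0$, i.e.\ is automatically of lower type $1$ with constant $C_0=1$. Plugging this into your argument gives the lemma for \emph{any} Young function with the explicit constant $C=\max\{1,\widetilde{C}\}$, so in fact neither type hypothesis is genuinely needed for this statement.
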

In the following lemma we provide a generalization of the property \eqref{lem2.4.} from $L_p$-norms to Orlicz norms.
\begin{lem}\label{Bmo-orlicz}
Let $f\in BMO(\Rn)$ and $\Phi$ be a Young function. Let $\Phi$ is lower type $p_0$ and upper type $p_1$ with $1\le p_0\le p_1<\i$, then
$$
\|f\|_\ast \thickapprox \sup_{x\in\Rn, r>0}\Phi^{-1}\big(r^{-n}\big)\left\|f(\cdot)-f_{B(x,r)}\right\|_{L_{\Phi}(B(x,r))}
$$
\end{lem}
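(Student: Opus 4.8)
The statement to prove is
$$
\|f\|_\ast \thickapprox \sup_{x\in\Rn, r>0}\Phi^{-1}\big(r^{-n}\big)\left\|f(\cdot)-f_{B(x,r)}\right\|_{L_{\Phi}(B(x,r))},
$$
and the plan is to establish the two inequalities separately. The direction $\lesssim$ (controlling $\|f\|_\ast$ by the Orlicz quantity) is the easier one, while the direction $\gtrsim$ is where the hypotheses on the type of $\Phi$ and the John--Nirenberg inequality enter. Throughout I would abbreviate $B=B(x,r)$ and $g=f-f_B$.

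First I would prove the lower bound for the Orlicz quantity in terms of $\|f\|_\ast$, i.e. $\|f\|_\ast \lesssim \sup_{x,r}\Phi^{-1}(r^{-n})\|g\|_{L_\Phi(B)}$. The natural tool is Lemma~\ref{lemHold}, which gives $\|g\|_{L_1(B)} \le 2|B|\,\Phi^{-1}(|B|^{-1})\,\|g\|_{L_\Phi(B)}$. Dividing by $|B|=v_n r^n$ and recalling that $\Phi^{-1}(|B|^{-1})=\Phi^{-1}((v_n r^n)^{-1})\thickapprox \Phi^{-1}(r^{-n})$ (using the doubling-type properties of $\Phi^{-1}$ listed in the excerpt), I get
$$
\frac{1}{|B|}\int_B |f(y)-f_B|\,dy \lesssim \Phi^{-1}\big(r^{-n}\big)\,\|g\|_{L_\Phi(B)}.
$$
Taking the supremum over all $x$ and $r$ yields $\|f\|_\ast \lesssim \sup_{x,r}\Phi^{-1}(r^{-n})\|g\|_{L_\Phi(B)}$, and this half needs no type hypothesis.

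For the reverse inequality I would use Lemma~\ref{Kylowupp}. Fix a ball $B$ and set $\mu = \widetilde{C}^{-1}\,\|f\|_\ast\,|B|\,\Phi^{-1}(|B|^{-1})$ for a suitable constant, or more cleanly estimate $\int_B \Phi\!\left(\frac{|f(y)-f_B|}{\mu}\right)dy$ directly and show it is bounded by a fixed constant $\widetilde{C}$ when $\mu \thickapprox \|f\|_\ast / \Phi^{-1}(r^{-n})$. To carry this out I would invoke the upper-type-$p_1$ property of $\Phi$ to write $\Phi(s/\mu)\le C(s/(\mu\|f\|_\ast))^{p_1}\Phi(\|f\|_\ast)$ for $s\ge \mu\|f\|_\ast$, convert the integral of $\Phi$ into an integral over the distribution function of $g$, and then apply the John--Nirenberg inequality (Remark~\ref{rem2.4.}(1)) to control $\big|\{y\in B:|f(y)-f_B|>\beta\}\big|\le C_1|B|e^{-C_2\beta/\|f\|_\ast}$. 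The exponential decay from John--Nirenberg defeats the polynomial growth $t^{p_1}$ of $\Phi$, so the resulting integral $\int_0^\infty t^{p_1-1}e^{-C_2 t}\,dt$ converges to a finite constant independent of $B$. Choosing $\mu$ proportional to $\|f\|_\ast/\Phi^{-1}(r^{-n})$ then makes $\int_B\Phi(|g|/\mu)\,dy\le\widetilde{C}$, so Lemma~\ref{Kylowupp} gives $\|g\|_{L_\Phi(B)}\lesssim \mu \thickapprox \|f\|_\ast/\Phi^{-1}(r^{-n})$, which rearranges to $\Phi^{-1}(r^{-n})\|g\|_{L_\Phi(B)}\lesssim\|f\|_\ast$, and taking the supremum completes the proof.

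The main obstacle is the second direction, specifically the bookkeeping that couples the normalization $\mu$, the factor $\Phi^{-1}(r^{-n})$, and the type-$p_1$ growth exponent so that the John--Nirenberg exponential integrates to a bound uniform in $B$. One must be careful that the constant $\widetilde{C}$ produced by the convergent integral $\int_0^\infty t^{p_1-1}e^{-C_2 t}\,dt$ does not depend on $x$ or $r$, which is what the scale-invariant phrasing of John--Nirenberg guarantees. The lower-type hypothesis is used only to ensure $\Phi\in\mathcal{Y}$ is well-behaved near the origin so that the splitting of the distribution-function integral at $\beta\approx\mu\|f\|_\ast$ is legitimate; the heavy lifting is done by the upper type and John--Nirenberg.
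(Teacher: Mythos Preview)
Your lower bound via Lemma~\ref{lemHold} matches the paper exactly, and your overall plan for the upper bound (show $\int_B\Phi(|g|/\mu)\,dy\le\widetilde C$ with $\mu\thickapprox\|f\|_\ast/\Phi^{-1}(|B|^{-1})$ and then invoke Lemma~\ref{Kylowupp}) is also the paper's. The gap is in where you apply the upper-type inequality. You write $\Phi(s/\mu)\le C\bigl(s/(\mu\|f\|_\ast)\bigr)^{p_1}\Phi(\|f\|_\ast)$, i.e.\ upper type at base point $\|f\|_\ast$. Carrying this through the layer-cake computation with $\mu=\|f\|_\ast/\Phi^{-1}(|B|^{-1})$, the John--Nirenberg exponent is $-C_2\mu t/\|f\|_\ast=-C_2 t/\Phi^{-1}(|B|^{-1})$, not $-C_2 t$ as you claim; after the natural substitution the bound picks up the prefactor $|B|\,\Phi^{-1}(|B|^{-1})^{p_1}$. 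This factor is \emph{not} uniformly bounded in $B$: lower type forces $\Phi^{-1}(v)\lesssim v^{1/p_0}$ for large $v$, so for small balls it behaves like $|B|^{1-p_1/p_0}\to\infty$ whenever $p_1>p_0$. The ``constant independent of $B$'' you assert is therefore not one.

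The fix---and this is exactly what the paper does---is to apply the type inequalities at base $\Phi^{-1}(|B|^{-1})$ instead. After normalizing $\|f\|_\ast=1$ one has $|g|/\mu=|g|\,\Phi^{-1}(|B|^{-1})$; splitting on $|g|\gtrless1$ and using upper and lower type together with $\Phi(\Phi^{-1}(|B|^{-1}))\le|B|^{-1}$ from \eqref{younginverse} yields the pointwise bound
\[
\Phi\bigl(|g(y)|\,\Phi^{-1}(|B|^{-1})\bigr)\lesssim |B|^{-1}\bigl(|g(y)|^{p_0}+|g(y)|^{p_1}\bigr),
\]
and now the $|B|^{-1}$ cancels against the measure of the ball when one integrates and applies the $L^p$-BMO characterization \eqref{lem2.4.}. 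So John--Nirenberg enters only through \eqref{lem2.4.}, and both type hypotheses are genuinely used---lower type handles the regime $|g|\le1$ where your upper-type bound says nothing---contrary to your last paragraph.
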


\begin{proof}
By H\"{o}lder's inequality, we have
$$\|f\|_\ast \lesssim \sup_{x\in\Rn, r>0}\Phi^{-1}\big(r^{-n}\big)\left\|f(\cdot)-f_{B(x,r)}\right\|_{L_{\Phi}(B(x,r))}.$$

Now we show that
$$
\sup_{x\in\Rn, r>0}\Phi^{-1}\big(r^{-n}\big)\left\|f(\cdot)-f_{B(x,r)}\right\|_{L_{\Phi}(B(x,r))} \lesssim \|f\|_\ast .
$$
Without loss of generality, we may assume that $\|f\|_\ast=1$; otherwise,
we replace $f$ by $f/\|f\|_\ast$. By the fact that $\Phi$ is lower type $p_0$ and upper type $p_1$ and \eqref{younginverse} it follows that
$$
\int_{B(x,r)}\Phi\left(\frac{|f(y)-f_{B(x,r)}|\Phi^{-1}\big(|B(x,r)|^{-1}\big)}{\|f\|_\ast}\right)dy
$$
$$
=\int_{B(x,r)}\Phi\left(|f(y)-f_{B(x,r)}|\Phi^{-1}\big(|B(x,r)|^{-1}\big)\right)dy
$$
$$
\lesssim\frac{1}{|B(x,r)|}\int_{B(x,r)}\left[|f(y)-f_{B(x,r)}|^{p_0}+|f(y)-f_{B(x,r)}|^{p_1}\right]dy\lesssim 1.
$$
By Lemma \ref{Kylowupp} we get the desired result.
\end{proof}

\begin{rem}
Note that statements of  type of  Lemma \ref{Bmo-orlicz} are known
in a more general case of rearrangement invariant spaces and also
variable exponent Lebesgue spaces $L^{p(\cdot)}$, see  \cite{Kwok-Pun Ho} and \cite{IzukiSaw}, but we gave
a short proof of Lemma \ref{Bmo-orlicz} for completeness of presentation.
\end{rem}

\begin{defn}
Let $\Phi$ be a Young function. Let
$$
a_{\Phi}:=\inf_{t\in(0,\i)}\frac{t\Phi^{\prime}(t)}{\Phi(t)}, \qquad  b_{\Phi}:=\sup_{t\in(0,\i)}\frac{t\Phi^{\prime}(t)}{\Phi(t)}.
$$
\end{defn}

\begin{rem}\label{indorl}
It is known that $\Phi\in \Delta_2\cap\nabla_2$ if and only if $1<a_{\Phi}\le b_{\Phi}<\i$ (See \cite{KrasnRut}).
\end{rem}

\begin{rem}
Remark \ref{indorl} and Remark \ref{remlowup} show us that a Young function $\Phi$ is lower type $p_0$ and upper type $p_1$ with $1<p_0\le p_1<\i$ if and only if $1<a_{\Phi}\le b_{\Phi}<\i$.
\end{rem}


The characterization of $(L_p,L_q)$ boundedness of the commutator $[b,I_{\a}]$ between $M_b$ and $I_\a$ was given by Chanillo \cite{Chanillo}.

\begin{thm}\cite{Chanillo}
Let $0<\a<n$, $1< p < \frac{n}{\a}$ and $\frac{1}{q}=\frac{1}{p}-\frac{\a}{n}$. Then $[b,I_{\a}]$ is a bounded operator from $L_p(\Rn)$ to $L_q(\Rn)$ if and only if $b\in BMO(\Rn)$.
\end{thm}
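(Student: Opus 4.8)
The statement asserts an equivalence, so the plan is to prove the two implications separately. For the sufficiency direction ($b\in BMO(\Rn)\Rightarrow[b,I_{\a}]\colon L_p\to L_q$ bounded), I would start from the kernel representation
\[
[b,I_{\a}]f(x)=\int_{\Rn}\frac{b(x)-b(y)}{|x-y|^{n-\a}}\,f(y)\,dy=(b-b_B)\,I_{\a}f(x)-I_{\a}\big((b-b_B)f\big)(x),
\]
valid on any ball $B$ with $b_B$ the mean of $b$ over $B$. The plan is then to run the Fefferman--Stein sharp maximal function machinery: one controls $[b,I_{\a}]f$ in $L_q$ by the sharp maximal function $M^{\sharp}([b,I_{\a}]f)$, and the latter pointwise by fractional maximal operators whose $L_p\to L_q$ bounds are the classical Hardy--Littlewood--Sobolev estimates.

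The heart of the matter --- and the step I expect to be the main obstacle --- is the pointwise inequality
\[
M^{\sharp}_{\delta}\big([b,I_{\a}]f\big)(x)\lesssim\|b\|_{\ast}\Big(M_{\a,\epsilon}f(x)+M_{\ell}\big(I_{\a}f\big)(x)\Big),
\]
for a fixed small $\delta\in(0,1)$ and suitable $\epsilon\in(1,p)$, $\ell\in(1,q)$, where $M^{\sharp}_{\delta}g=(M^{\sharp}(|g|^{\delta}))^{1/\delta}$, $M_{\ell}g=(M(|g|^{\ell}))^{1/\ell}$ and $M_{\a,\epsilon}f=(M_{\a\epsilon}(|f|^{\epsilon}))^{1/\epsilon}$. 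To establish it I would fix $B=B(x_0,r)$, write $f=f_1+f_2$ with $f_1=f\chi_{2B}$, subtract the constant $c=I_{\a}((b-b_B)f_2)(x_0)$, and estimate the three pieces of $[b,I_{\a}]f-c$ over $B$. The term $(b-b_B)I_{\a}f$ is handled by H\"older together with the John--Nirenberg inequality \eqref{lem2.4.} (equivalently its Orlicz form, Lemma \ref{Bmo-orlicz}), giving $\|b\|_{\ast}M_{\ell}(I_{\a}f)$; the local term $I_{\a}((b-b_B)f_1)$ is treated, since $\delta<1$, via Kolmogorov's inequality and the weak $(1,n/(n-\a))$ bound for $I_{\a}$, producing $\|b\|_{\ast}M_{\a,\epsilon}f(x_0)$. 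The delicate piece is the oscillation of $I_{\a}((b-b_B)f_2)$ about $c$: here I would use the kernel smoothness estimate $\big||y-z|^{\a-n}-|x_0-z|^{\a-n}\big|\lesssim r\,|x_0-z|^{\a-n-1}$ for $y\in B$ and $z\notin 2B$, decompose the complement of $2B$ into dyadic annuli $B_k=2^{k+1}B\setminus 2^{k}B$, and balance the geometric decay $2^{-k}$ of the kernel difference against the logarithmic growth $|B_k|^{-1}\int_{B_k}|b-b_B|\lesssim k\|b\|_{\ast}$ supplied by \eqref{propBMO}; summing the convergent series $\sum_k k2^{-k}$ leaves $\|b\|_{\ast}M_{\a}f(x_0)$, which is dominated by $\|b\|_{\ast}M_{\a,\epsilon}f(x_0)$. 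Combining the pointwise bound with the Fefferman--Stein inequality $\|g\|_{L_q}\lesssim\|M^{\sharp}g\|_{L_q}$ and the $L_p\to L_q$ boundedness of $M_{\a,\epsilon}$ and of $M_{\ell}\circ I_{\a}$ completes the sufficiency.

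For the necessity ($[b,I_{\a}]$ bounded with norm $N\Rightarrow b\in BMO(\Rn)$) I would use a sign-splitting argument that exploits the positivity of the Riesz kernel. Fix $B=B(x_0,r)$, put $E=\{x\in B:b(x)\ge b_B\}$ and $F=B\setminus E$, and note the elementary identity
\[
\int_{E}\int_{F}\big(b(x)-b(y)\big)\,dy\,dx=\frac{|B|}{2}\int_{B}|b(x)-b_B|\,dx,
\]
whose integrand is nonnegative on $E\times F$. Since $|x-y|\le 2r$ there, we have $1\le(2r)^{n-\a}|x-y|^{\a-n}$, so inserting the kernel and recognising the inner integral as $[b,I_{\a}]\chi_{F}(x)=\int_{F}\frac{b(x)-b(y)}{|x-y|^{n-\a}}\,dy$ gives
\[
\int_{B}|b-b_B|\,dx\le\frac{2(2r)^{n-\a}}{|B|}\int_{E}[b,I_{\a}]\chi_{F}(x)\,dx\le\frac{2(2r)^{n-\a}}{|B|}\,\big\|[b,I_{\a}]\chi_{F}\big\|_{L_q}\,|E|^{1/q'}.
\]
By the assumed boundedness $\|[b,I_{\a}]\chi_{F}\|_{L_q}\le N|F|^{1/p}$, and using $|E|,|F|\le|B|$ together with the Sobolev relation $\tfrac1p+\tfrac1{q'}=1+\tfrac{\a}{n}$, all powers of $r$ cancel and the right-hand side is $\lesssim N|B|$. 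Hence $\frac{1}{|B|}\int_{B}|b-b_B|\lesssim N$ uniformly in $B$, that is $b\in BMO(\Rn)$ with $\|b\|_{\ast}\lesssim N$, which closes the equivalence.
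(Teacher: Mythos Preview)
The paper does not supply a proof of this theorem: it is quoted from Chanillo \cite{Chanillo} as background, with no argument given in the paper itself. There is therefore nothing in the paper to compare your proposal against.

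That said, your outline is essentially correct and follows what has become the standard route. For sufficiency, the sharp-maximal-function scheme you sketch (decompose $f=f_1+f_2$, handle the three pieces of $[b,I_{\alpha}]f-c$ via John--Nirenberg, Kolmogorov together with the weak $(1,n/(n-\alpha))$ bound for $I_{\alpha}$, and the dyadic-annulus estimate balancing the kernel decay $2^{-k}$ against the logarithmic BMO growth from \eqref{propBMO}) is exactly the modern proof; the only routine caveat is that the Fefferman--Stein inequality $\|g\|_{L_q}\lesssim\|M^{\sharp}_{\delta}g\|_{L_q}$ needs an a~priori finiteness assumption, usually arranged by first taking $b$ bounded and $f$ nice, then passing to limits. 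For necessity, your sign-splitting computation is clean and correct: the identity $\int_E\int_F(b(x)-b(y))\,dy\,dx=\tfrac{|B|}{2}\int_B|b-b_B|\,dx$ follows from $\int_B(b-b_B)=0$, and the exponent bookkeeping $\tfrac{1}{p}+\tfrac{1}{q'}=1+\tfrac{\alpha}{n}$ indeed makes all powers of $r$ cancel so that the right-hand side is $\lesssim N|B|$.
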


The $(L_{\Phi},L_{\Psi})$ boundedness of the commutator $[b,I_{\a}]$ was given by Fu, Yang and Yuan \cite{FuYangYuan}.

\begin{thm}\cite{FuYangYuan}\label{comorlicz}
Let $0<\a<n$ and $b\in BMO(\Rn)$. Let $\Phi$ be a Young function and $\Psi$ defined, via its inverse, by setting, for all $t\in(0,\i)$, $\Psi^{-1}(t):=\Phi^{-1}(t)t^{-\a/n}$. If $1<a_{\Phi}\le b_{\Phi}<\i$ and $1<a_{\Psi}\le b_{\Psi}<\i$ then $[b,I_{\a}]$ is bounded from $L_{\Phi}(\Rn)$ to $L_{\Psi}(\Rn)$.
\end{thm}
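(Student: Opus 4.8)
The plan is to run the Coifman--Rochberg--Weiss argument in the Orlicz setting, using the Fefferman--Stein sharp maximal operator $M^{\#}$. First I would record the consequences of the index hypotheses: by Remark \ref{indorl} the assumptions $1<a_{\Phi}\le b_{\Phi}<\i$ and $1<a_{\Psi}\le b_{\Psi}<\i$ mean $\Phi,\Psi\in\Delta_2\cap\nabla_2$. In particular $\Psi\in\nabla_2$, so the Hardy--Littlewood maximal operator $M$ is bounded on $L_{\Psi}(\Rn)$ and the Fefferman--Stein inequality $\|g\|_{L_{\Psi}}\lesssim\|M^{\#}g\|_{L_{\Psi}}$ is available for $g$ in a suitable dense class. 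Moreover, the defining relation $\Psi^{-1}(t)=\Phi^{-1}(t)\,t^{-\a/n}$ together with the index conditions guarantees that the hypotheses of Theorem \ref{bounFrMaxOrl} are satisfied, so $I_{\a}$ maps $L_{\Phi}(\Rn)$ boundedly into $L_{\Psi}(\Rn)$; via the domination $M_{\a}f\lesssim I_{\a}(|f|)$ the fractional maximal operator $M_{\a}$ enjoys the same boundedness.

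The core of the proof is the pointwise sharp maximal estimate
$$
M^{\#}\big([b,I_{\a}]f\big)(x)\lesssim\|b\|_{\ast}\,\big(M_{\a}f(x)+M(I_{\a}f)(x)\big).
$$
To prove it I would fix a ball $B=B(x_0,r)\ni x$ and use the pointwise identity
$$
[b,I_{\a}]f=(b-b_{2B})\,I_{\a}f-I_{\a}\big((b-b_{2B})f\chi_{2B}\big)-I_{\a}\big((b-b_{2B})f\chi_{\dual{(2B)}}\big),
$$
subtracting from $[b,I_{\a}]f$ the constant $c=I_{\a}\big((b-b_{2B})f\chi_{\dual{(2B)}}\big)(x_0)$ before averaging over $B$. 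The contribution of $(b-b_{2B})I_{\a}f$ is estimated by Orlicz--H\"older (Theorem \ref{HolderOr}) and the Orlicz $BMO$ characterization of Lemma \ref{Bmo-orlicz}, which is exactly the step producing the factor $\|b\|_{\ast}$ times an average of $I_{\a}f$. The local term $I_{\a}\big((b-b_{2B})f\chi_{2B}\big)$ is dominated, after another Orlicz--H\"older splitting and Lemma \ref{lem4.0}, by $\|b\|_{\ast}M_{\a}f(x)$. Finally the tail, with $c$ subtracted, is expanded over the dyadic annuli $2^{k+1}B\setminus 2^{k}B$: kernel smoothness supplies a factor $\lesssim 2^{-k}$, the growth $|b_{2^{k+1}B}-b_{2B}|\lesssim k\|b\|_{\ast}$ from \eqref{propBMO} is absorbed by this factor, and summing the resulting convergent series produces $\|b\|_{\ast}M_{\a}f(x)$.

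It then remains to take $L_{\Psi}$ norms. Applying the Fefferman--Stein inequality to $g=[b,I_{\a}]f$ and then the pointwise bound gives
$$
\|[b,I_{\a}]f\|_{L_{\Psi}}\lesssim\|b\|_{\ast}\big(\|M_{\a}f\|_{L_{\Psi}}+\|M(I_{\a}f)\|_{L_{\Psi}}\big).
$$
The first norm is $\lesssim\|b\|_{\ast}\|f\|_{L_{\Phi}}$ by the boundedness of $M_{\a}\colon L_{\Phi}\to L_{\Psi}$ noted above; for the second, boundedness of $M$ on $L_{\Psi}$ (from $\Psi\in\nabla_2$) reduces it to $\|I_{\a}f\|_{L_{\Psi}}$, which is $\lesssim\|f\|_{L_{\Phi}}$ by Theorem \ref{bounFrMaxOrl}. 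Combining yields $\|[b,I_{\a}]f\|_{L_{\Psi}}\lesssim\|b\|_{\ast}\|f\|_{L_{\Phi}}$, as required.

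I expect the main obstacle to be the pointwise sharp maximal estimate, and within it two points of care. The first is honestly extracting a single factor $\|b\|_{\ast}$: the H\"older splittings must be set up with complementary Young functions chosen so that each piece closes, and the $BMO$ control must come from the Orlicz John--Nirenberg substitute (Lemma \ref{Bmo-orlicz}) rather than the $L_p$ oscillation identity \eqref{lem2.4.}. The second is that, to make the Fefferman--Stein step fully rigorous, one should work with the regularized operators $M^{\#}_{\delta}$ and $M_{\delta}$ for a small $\delta\in(0,1)$ together with a fractional maximal operator built on $L_{s}$ averages; the freedom to choose $\delta<1$ and $1<s<a_{\Psi}$ is precisely what the nontrivial lower and upper types of $\Phi$ and $\Psi$ provide, and it keeps these operators bounded into $L_{\Psi}$. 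The a priori finiteness needed to invoke Fefferman--Stein is then handled by density, using $\Psi\in\Delta_2$.
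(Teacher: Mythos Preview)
The paper does not prove this theorem; it is quoted verbatim from \cite{FuYangYuan} and used as a black box in the proof of Lemma \ref{lem5.1.}. So there is no ``paper's own proof'' to compare your proposal against.

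That said, your sketch is a faithful rendition of the standard Coifman--Rochberg--Weiss strategy transplanted to the Orlicz setting, and it is essentially the route taken in \cite{FuYangYuan}. A couple of remarks on the points you flagged. First, the pointwise estimate in the unadorned form $M^{\#}\big([b,I_{\a}]f\big)\lesssim\|b\|_{\ast}\big(M_{\a}f+M(I_{\a}f)\big)$ is not quite what one obtains: the local piece $I_{\a}\big((b-b_{2B})f\chi_{2B}\big)$ requires a H\"older splitting that forces an $L_s$-average, so the correct bound involves $M_{\a,s}f$ and $M_s(I_{\a}f)$ for some $s>1$ (or, equivalently, one works with $M^{\#}_{\delta}$ for $0<\delta<1$). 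You note this yourself at the end, and the hypotheses $1<a_{\Phi}$, $1<a_{\Psi}$ are exactly what give room to choose such $s$ and $\delta$ while keeping $M_{\a,s}:L_{\Phi}\to L_{\Psi}$ and $M_s:L_{\Psi}\to L_{\Psi}$ bounded. Second, the a priori finiteness needed for the Fefferman--Stein step (to rule out the pathology $\|Mg\|_{L_{\Psi}}=\infty$) is indeed handled by density plus $\Psi\in\Delta_2$, as you say; alternatively one can first prove the estimate for bounded compactly supported $f$, for which $[b,I_{\a}]f\in L_{\Psi}$ follows directly from $I_{\a}:L_{\Phi}\to L_{\Psi}$ and the trivial bound on $bI_{\a}f-I_{\a}(bf)$ restricted to large balls.
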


We will  use the following statement on the boundedness of the weighted Hardy operator
$$
H^{\ast}_{w} g(r):=\int_r^{\infty} \left(1+\ln \frac{t}{r}\right) \, g(t) w(t) dt,~ \ \  r\in (0,\infty),
$$
where $w$ is a weight.

The following theorem was proved in \cite{GulEMJ2012}.

\begin{thm}\label{thm3.2.com}
Let $v_1$, $v_2$ and $w$ be weights on  $(0,\infty)$ and $v_1(t)$ be bounded outside a neighborhood of the
origin. The inequality
\begin{equation} \label{vav01com}
\ess_{r>0} v_2(r) H^{\ast}_{w} g(r) \leq C \ess_{r>0} v_1(r) g(r)
\end{equation}
holds for some $C>0$ for all non-negative and non-decreasing $g$ on $(0,\i)$ if and
only if
\begin{equation} \label{vav02com}
B:= \sup_{r>0} v_2(r)\int_r^{\infty} \left(1+\ln \frac{t}{r}\right) \, \frac{w(t) dt}{\ess_{t<s<\infty} v_1(s)}<\infty.
\end{equation}
Moreover, the value  $C=B$ is the best constant for  \eqref{vav01com}.
\end{thm}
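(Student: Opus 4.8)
The plan is to follow the scheme of Theorem \ref{thm3.2.}, the only genuinely new feature being the logarithmic kernel $1+\ln\frac{t}{r}$, which will turn out not to affect the argument on the side of $g$. Throughout I write $V(t):=\ess_{t<s<\i}v_1(s)$; since the essential supremum is taken over the shrinking family of intervals $(t,\i)$, the function $V$ is non-increasing, so $1/V$ is non-decreasing. First I would record the pointwise fact that $v_1(t)\le V(t)$ for a.e. $t>0$: slicing through rational levels, for each $q\in\mathbb{Q}$ the set $\{t:V(t)<q\}$ is a right ray $(a_q,\i)$ on which $\{s>a_q:v_1(s)\ge q\}$ is null by the very definition of the essential supremum, and the exceptional set $\{t:v_1(t)>V(t)\}$ is the countable union over $q$ of such null sets. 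This observation is exactly what makes the extremal test function admissible in the necessity part.

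For sufficiency, assume $B<\i$ and let $g$ be non-negative and non-decreasing with $A:=\ess_{s>0}v_1(s)g(s)<\i$. Fix $t>0$. By monotonicity $g(t)\le g(s)$ for every $s>t$, while $v_1(s)g(s)\le A$ for a.e. $s$, so $g(t)\le A/v_1(s)$ for a.e. $s>t$; taking the essential infimum over such $s$ yields the basic estimate $g(t)\le A/V(t)$, which is independent of the kernel. Substituting this into $H^{\ast}_{w}g$ and using the definition \eqref{vav02com} of $B$ gives
\[
v_2(r)H^{\ast}_{w}g(r)=v_2(r)\int_r^{\i}\Big(1+\ln\tfrac{t}{r}\Big)g(t)w(t)\,dt\le A\,v_2(r)\int_r^{\i}\Big(1+\ln\tfrac{t}{r}\Big)\frac{w(t)}{V(t)}\,dt\le A\,B .
\]
Taking the essential supremum over $r$ yields \eqref{vav01com} with $C=B$, so the best constant is at most $B$.

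For necessity, suppose \eqref{vav01com} holds with some constant $C$ and test it on $g^{\ast}(t):=1/V(t)$. By the remark above $g^{\ast}$ is non-negative and non-decreasing, hence admissible, and the a.e. inequality $v_1\le V$ gives $\ess_{r>0}v_1(r)g^{\ast}(r)=\ess_{r>0}v_1(r)/V(r)\le 1$. Feeding $g^{\ast}$ into \eqref{vav01com} therefore produces $\ess_{r>0}v_2(r)\int_r^{\i}\big(1+\ln\tfrac{t}{r}\big)\frac{w(t)}{V(t)}\,dt\le C$, and the left-hand side is precisely the quantity whose supremum defines $B$. The point is that $g^{\ast}=1/V$ is the \emph{largest} non-decreasing function subject to $v_1g\le 1$ a.e.: for any such $g$ and any $s>t$ one has $g(t)\le g(s)\le 1/v_1(s)$, whence $g(t)\le 1/V(t)$. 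Thus testing on $g^{\ast}$ is sharp and forces $B\le C$, which together with sufficiency gives $C=B$ as the best constant.

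The step requiring the most care is the interchange between the $\sup$ in the definition \eqref{vav02com} of $B$ and the $\ess\sup$ appearing in \eqref{vav01com}: the necessity argument directly controls the essential supremum of the map $r\mapsto v_2(r)\int_r^{\i}(1+\ln\tfrac{t}{r})w(t)/V(t)\,dt$, and one must argue, exactly as for Theorem \ref{thm3.2.}, that it coincides with the ordinary supremum, the inner integral being continuous in $r$ by dominated convergence. By contrast the logarithmic factor is harmless: the reduction $g(t)\le A/V(t)$ and the extremality of $g^{\ast}$ are oblivious to the kernel, so $1+\ln\tfrac{t}{r}$ is simply carried along inside the weight in both directions.
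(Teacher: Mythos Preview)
The paper does not supply its own proof of this theorem; it is quoted from \cite{GulEMJ2012} and used as a black box. Your argument---the pointwise bound $g(t)\le A/V(t)$ for the sufficiency and the extremal test function $g^\ast=1/V$ for the necessity---is precisely the standard route for Hardy-type inequalities of this form, and the observation that the logarithmic factor $1+\ln(t/r)$ plays no role in either step is exactly right: it is absorbed into the weight and carried along passively.

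One point does not go through as written. In your final paragraph you note that necessity only yields the \emph{essential} supremum of $r\mapsto v_2(r)\int_r^{\i}(1+\ln\tfrac{t}{r})w(t)/V(t)\,dt$, whereas $B$ in \eqref{vav02com} is a genuine supremum, and you propose to close the gap via continuity of the inner integral. But continuity of the integral alone is not enough: $v_2$ is an arbitrary measurable weight, so modifying $v_2$ on a null set leaves \eqref{vav01com} unchanged while it can make the pointwise supremum in \eqref{vav02com} arbitrarily large. Thus without an additional regularity hypothesis on $v_2$ (or reading $B$ as an essential supremum, which is how the result is effectively used later in the paper), the sharp identification $C=B$ cannot hold as stated. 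This is a defect of the formulation rather than of your strategy; the substantive content---equivalence of \eqref{vav01com} with finiteness of the essential supremum version of $B$, with equality of best constants---is exactly what your argument proves.
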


\begin{rem}\label{rem2.3.}
In \eqref{vav01com} and \eqref{vav02com} it is assumed that $\frac{1}{\i}=0$ and $0 \cdot \i=0$.
\end{rem}

The following lemma is valid.
\begin{lem}\label{lem5.1.}
Let $0<\a<n$ and $b\in BMO(\Rn)$. Let $\Phi$ be a Young function and $\Psi$ defined, via its inverse, by setting, for all $t\in(0,\i)$, $\Psi^{-1}(t):=\Phi^{-1}(t)t^{-\a/n}$ and $1<a_{\Phi}\le b_{\Phi}<\i$ and $1<a_{\Psi}\le b_{\Psi}<\i$,
then the inequality
\begin{equation*}\label{eq5.1.}
\|[b,I_{\a}]f\|_{L_\Psi(B(x_0,r))} \lesssim \|b\|_{*} \, \frac{1}{\Psi^{-1}\big(r^{-n}\big)}
 \int_{2r}^{\i} \Big(1+\ln \frac{t}{r}\Big)\|f\|_{L_{\Phi}(B(x_0,t))}  \Psi^{-1}\big(t^{-n}\big) \frac{dt}{t}
\end{equation*}
holds for any ball $B(x_0,r)$ and for all $f\in L_{\Phi}^{\rm loc}(\Rn)$.

\end{lem}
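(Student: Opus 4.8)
The plan is to adapt the local-estimate technique from the proof of Lemma~\ref{lemGultecnOrl}, now tracking the oscillation of $b$, which is what produces the logarithmic factor. Fix $B=B(x_0,r)$ and set $b_B:=b_{B(x_0,r)}$. Since $I_\a$ commutes with addition of constants, $[b,I_\a]f=[b-b_B,I_\a]f$. Splitting $f=f_1+f_2$ with $f_1=f\chi_{2B}$ and $f_2=f\chi_{\dual{(2B)}}$, the triangle inequality gives
$$\|[b,I_\a]f\|_{L_\Psi(B)}\le \|[b,I_\a]f_1\|_{L_\Psi(B)}+\|(b-b_B)I_\a f_2\|_{L_\Psi(B)}+\|I_\a((b-b_B)f_2)\|_{L_\Psi(B)},$$
where in the last two terms I used $[b,I_\a]f_2=(b-b_B)I_\a f_2-I_\a((b-b_B)f_2)$. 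I will bound each of the three terms by the right-hand side of the asserted inequality.

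For the local term I would invoke the Fu--Yang--Yuan boundedness (Theorem~\ref{comorlicz}), legitimate because the hypotheses $1<a_\Phi\le b_\Phi<\i$ and $1<a_\Psi\le b_\Psi<\i$ are exactly its assumptions, to get $\|[b,I_\a]f_1\|_{L_\Psi(B)}\le\|[b,I_\a]f_1\|_{L_\Psi(\Rn)}\lesssim\|b\|_\ast\|f\|_{L_\Phi(2B)}$. To absorb $\|f\|_{L_\Phi(2B)}$ into the integral I would argue exactly as for \eqref{ves2F}: since $\|f\|_{L_\Phi(B(x_0,t))}\ge\|f\|_{L_\Phi(2B)}$ for $t\ge2r$ and $\Psi^{-1}(r^{-n})\lesssim\int_{2r}^\i\Psi^{-1}(t^{-n})\frac{dt}{t}$ by \eqref{2.3}, the factor $(1+\ln\frac{t}{r})\ge1$ only helps. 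For the second term, the Fubini computation from Lemma~\ref{lemGultecnOrl}, using Lemma~\ref{lemHold} and the identity $\Phi^{-1}(t^{-n})=t^{-\a}\Psi^{-1}(t^{-n})$ that follows directly from $\Psi^{-1}(t)=\Phi^{-1}(t)t^{-\a/n}$, yields the uniform bound $|I_\a f_2(x)|\lesssim\int_{2r}^\i\|f\|_{L_\Phi(B(x_0,t))}\Psi^{-1}(t^{-n})\frac{dt}{t}$ for $x\in B$. Pulling this constant out and applying the Orlicz--$BMO$ equivalence (Lemma~\ref{Bmo-orlicz} for $\Psi$) in the form $\|b-b_B\|_{L_\Psi(B)}\lesssim\|b\|_\ast/\Psi^{-1}(r^{-n})$ finishes this term.

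The third term is the crux and the source of the logarithm. For $x\in B$ I would estimate $|I_\a((b-b_B)f_2)(x)|\lesssim\int_{2r}^\i t^{\a-n-1}\int_{B(x_0,t)}|b(y)-b_B|\,|f(y)|\,dy\,dt$ and split $b-b_B=(b-b_{B(x_0,t)})+(b_{B(x_0,t)}-b_B)$. The first piece is controlled by the generalized H\"older inequality (Theorem~\ref{HolderOr}) together with Lemma~\ref{Bmo-orlicz} applied to $\widetilde\Phi$ (valid since $\Phi\in\Delta_2\cap\nabla_2$ forces $\widetilde\Phi\in\Delta_2\cap\nabla_2$), using $\widetilde\Phi^{-1}(t^{-n})\approx t^{-n}/\Phi^{-1}(t^{-n})$ from \eqref{2.3}; the second piece uses Lemma~\ref{lemHold} and the oscillation bound $|b_{B(x_0,t)}-b_B|\lesssim\|b\|_\ast(1+\ln\frac{t}{r})$ coming from \eqref{propBMO}. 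Together these give the uniform estimate $|I_\a((b-b_B)f_2)(x)|\lesssim\|b\|_\ast\int_{2r}^\i(1+\ln\frac{t}{r})\Psi^{-1}(t^{-n})\|f\|_{L_\Phi(B(x_0,t))}\frac{dt}{t}$ on $B$; multiplying by $\|\chi_B\|_{L_\Psi(B)}=1/\Psi^{-1}(|B|^{-1})\approx1/\Psi^{-1}(r^{-n})$ (Lemma~\ref{lem4.0}) produces exactly the claimed bound. I expect the main obstacle to be the bookkeeping in this third term: keeping the two-scale $BMO$ decomposition, the conjugate estimate through $\widetilde\Phi$, and the index hypotheses aligned so that both applications of Lemma~\ref{Bmo-orlicz} are justified and the powers of $t$ combine to reconstruct $\Psi^{-1}(t^{-n})$.
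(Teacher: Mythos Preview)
Your proposal is correct and follows essentially the same route as the paper's proof: the same $f=f_1+f_2$ split, the Fu--Yang--Yuan theorem for the local piece, the same two-term handling of the far piece via $b(\cdot)-b_B$ (your second term is the paper's $J_2$, controlled through Lemma~\ref{Bmo-orlicz} for $\Psi$) and $b(y)-b_B$ (your third term is the paper's $J_1$, split at scale $t$ and controlled via H\"older, Lemma~\ref{Bmo-orlicz} for $\widetilde{\Phi}$, and \eqref{propBMO}), and finally \eqref{ves2F} to absorb $\|f\|_{L_\Phi(2B)}$. The only cosmetic difference is that you write the far piece as the operator identity $[b,I_\alpha]f_2=(b-b_B)I_\alpha f_2-I_\alpha((b-b_B)f_2)$ while the paper uses the pointwise triangle inequality $|b(y)-b(x)|\le|b(y)-b_B|+|b(x)-b_B|$; and you invoke the exact relation $\Phi^{-1}(t^{-n})t^{\alpha}=\Psi^{-1}(t^{-n})$ from the hypothesis, whereas the paper routes this through \eqref{sal00}.
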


\begin{proof}
For arbitrary $x_0 \in\Rn$, set $B=B(x_0,r)$ for the ball
centered at $x_0$ and of radius $r$. Write $f=f_1+f_2$ with
$f_1=f\chi_{_{2B}}$ and $f_2=f\chi_{_{\dual (2B)}}$. Hence
$$
\left\|[b,I_{\a}]f\right\|_{L_\Psi(B)} \leq
\left\|[b,I_{\a}]f_1 \right\|_{L_\Psi(B)}+
\left\|[b,I_{\a}]f_2 \right\|_{L_\Psi(B)}.
$$
From the boundedness of $[b,I_{\a}]$ from $L_{\Phi}(\Rn)$ to $L_{\Psi}(\Rn)$ (see Theorem \ref{comorlicz})  
it follows that
\begin{align*}
\|[b,I_{\a}]f_1\|_{L_\Psi(B)} & \leq
\|[b,I_{\a}]f_1\|_{L_\Psi(\Rn)}
\\
& \lesssim  \|b\|_{*} \, \|f_1\|_{L_\Phi(\Rn)} = \|b\|_{*} \, \|f\|_{L_\Phi(2B)}.
\end{align*}

For $x \in B$ we have
\begin{align*}
|[b,I_{\a}]f_2(x)| & \lesssim \int_{\Rn} \frac{|b(y)-b(x)|}{|x-y|^{n-\a}}|f(y)|dy
\\
&\thickapprox \int_{\dual (2B)} \frac{|b(y)-b(x)|}{|x_0-y|^{n-\a}}|f(y)| dy.
\end{align*}

Then
\begin{align*}
\|[b,I_{\a}]f_2\|_{L_\Psi(B)} & \lesssim
\left\|\int_{\dual (2B)} \frac{|b(y)-b(\cdot)|}{|x_0-y|^{n-\a}}|f(y)|dy\right\|_{L_\Psi(B)}
\\
&\lesssim \left\|\int_{\dual (2B)} \frac{|b(y)-b_{B}|}{|x_0-y|^{n-\a}}|f(y)|dy\right\|_{L_\Psi(B)}
\\
&\quad +\left\|\int_{\dual (2B)}
\frac{|b(\cdot)-b_{B}|}{|x_0-y|^{n-\a}}|f(y)|dy\right\|_{L_\Psi(B)}
\\
&=J_1+J_2.
\end{align*}
Let us estimate $J_1$.
\begin{align*}
J_1&= \frac{1}{\Psi^{-1}\big(r^{-n}\big)}\int_{\dual
(2B)}\frac{|b(y)-b_{B}|}{|x_0-y|^{n-\a}}|f(y)|dy
\\
&\thickapprox \frac{1}{\Psi^{-1}\big(r^{-n}\big)}\int_{\dual
(2B)}|b(y)-b_{B}||f(y)|\int_{|x_0-y|}^{\infty}\frac{dt}{t^{n+1-\a}}dy
\\
&\thickapprox \frac{1}{\Psi^{-1}\big(r^{-n}\big)} \int_{2r}^{\infty}\int_{2r\leq |x_0-y|\leq t}
|b(y)-b_{B}||f(y)|dy\frac{dt}{t^{n+1-\a}}
\\
&\lesssim \frac{1}{\Psi^{-1}\big(r^{-n}\big)} \int_{2r}^{\infty}\int_{B(x_0,t)}
|b(y)-b_{B}||f(y)|dy\frac{dt}{t^{n+1-\a}}.
\end{align*}

Applying H\"older's inequality, by Lemma \ref{Bmo-orlicz} and \eqref{propBMO}  we get
\allowdisplaybreaks
\begin{align*}
J_1 & \lesssim \frac{1}{\Psi^{-1}\big(r^{-n}\big)} \int_{2r}^{\infty}\int_{B(x_0,t)}
|b(y)-b_{B(x_0,t)}||f(y)|dy\frac{dt}{t^{n+1-\a}}
\\
&\quad + \frac{1}{\Psi^{-1}\big(r^{-n}\big)} \int_{2r}^{\infty}|b_{B(x_0,r)}-b_{B(x_0,t)}|
\int_{B(x_0,t)} |f(y)|dy\frac{dt}{t^{n+1-\a}}
\\
&\lesssim \frac{1}{\Psi^{-1}\big(r^{-n}\big)} \int_{2r}^{\infty}
\left\|b(\cdot)-b_{B(x_0,t)}\right\|_{L_{\widetilde{\Phi}}(B(x_0,t))} \|f\|_{L_\Phi(B(x_0,t))}\frac{dt}{t^{n+1-\a}}
\\
& \quad + \frac{1}{\Psi^{-1}\big(r^{-n}\big)} \int_{2r}^{\infty}|b_{B(x_0,r)}-b_{B(x_0,t)}|
\|f\|_{L_\Phi(B(x_0,t))}\Phi^{-1}\big(t^{-n}\big)\frac{dt}{t^{1-\a}}
\\
& \lesssim \|b\|_{*}\,\frac{1}{\Psi^{-1}\big(r^{-n}\big)}
\int_{2r}^{\infty}\Big(1+\ln \frac{t}{r}\Big)
\|f\|_{L_\Phi(B(x_0,t))}\Psi^{-1}\big(t^{-n}\big)\frac{dt}{t}.
\end{align*}
In order to estimate $J_2$ note that
\begin{align*}
J_2\thickapprox
\left\|b(\cdot)-b_{B}\right\|_{L_\Psi(B)}\int_{\dual (2B)}
\frac{|f(y)|}{|x_0-y|^{n-\a}}dy.
\end{align*}
By Lemma \ref{Bmo-orlicz}, we get
\begin{align*}
J_2\lesssim \|b\|_{*}\,\frac{1}{\Psi^{-1}\big(r^{-n}\big)}\int_{\dual (2B)}
\frac{|f(y)|}{|x_0-y|^{n-\a}}dy.
\end{align*}
Thus, by \eqref{sal00}
$$
J_2\lesssim \|b\|_{*}\,\frac{1}{\Psi^{-1}\big(r^{-n}\big)}
\int_{2r}^{\infty}\|f\|_{L_\Phi(B(x_0,t))}\Psi^{-1}\big(t^{-n}\big)\frac{dt}{t}.
$$
Summing $J_1$ and $J_2$ we get
\begin{equation*}
\|[b,I_{\a}]f_2\|_{L_\Psi(B)}
\lesssim \|b\|_{*}\,\frac{1}{\Psi^{-1}\big(r^{-n}\big)}
\int_{2r}^{\infty}\Big(1+\ln \frac{t}{r}\Big) \|f\|_{L_\Phi(B(x_0,t))}\Psi^{-1}\big(t^{-n}\big)\frac{dt}{t}.
\end{equation*}
Finally,
$$
\|[b,I_{\a}]f\|_{L_\Psi(B)}\lesssim \|b\|_{*}\,\|f\|_{L_\Phi(2B)}+
\|b\|_{*}\,\frac{1}{\Psi^{-1}\big(r^{-n}\big)}
\int_{2r}^{\infty}\Big(1+\ln \frac{t}{r}\Big) \|f\|_{L_\Phi(B(x_0,t))}\Psi^{-1}\big(t^{-n}\big)\frac{dt}{t},
$$
and the statement of Lemma \ref{lem5.1.} follows by \eqref{ves2F}.

%
\end{proof}

\begin{thm}\label{3.4.XcomT}
Let $0<\a<n$ and $b\in BMO(\Rn)$. Let $\Phi$ be a Young function and $\Psi$ defined, via its inverse, by setting, for all $t\in(0,\i)$, $\Psi^{-1}(t):=\Phi^{-1}(t)t^{-\a/n}$ and $1<a_{\Phi}\le b_{\Phi}<\i$ and $1<a_{\Psi}\le b_{\Psi}<\i$.  $(\varphi_1,\varphi_2)$ and $(\Phi, \Psi)$ satisfy the condition
\begin{equation}\label{eq3.6.VZfrMax}
\int_{r}^{\i}\Big(1+\ln \frac{t}{r}\Big) \es_{t<s<\i}\frac{\varphi_1(x,s)}{\Phi^{-1}\big(s^{-n}\big)}\Psi^{-1}\big(t^{-n}\big)\frac{dt}{t}  \le C \, \varphi_2(x,r),
\end{equation}
where $C$ does not depend on $x$ and $r$.

Then  the operator $[b,I_{\a}]$ is bounded from $M_{\Phi,\varphi_1}(\Rn)$ to
$M_{\Psi,\varphi_2}(\Rn)$. Moreover
$$\|[b,I_{\a}] f\|_{M_{\Psi,\varphi_2}} \lesssim  \|b\|_{*}\|f\|_{M_{\Phi,\varphi_1}}.$$
%
\end{thm}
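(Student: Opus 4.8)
The plan is to reproduce the two-step scheme already used for Theorem~\ref{thm4.4.}, but now feeding in the local (inner) estimate of Lemma~\ref{lem5.1.}, which already carries the extra logarithmic weight, together with the logarithmically weighted Hardy inequality of Theorem~\ref{thm3.2.com}. First I would unravel the target norm: by definition
$$\|[b,I_\a] f\|_{M_{\Psi,\varphi_2}} = \sup_{x\in\Rn,\; r>0}\varphi_2(x,r)^{-1}\,\Psi^{-1}\big(|B(x,r)|^{-1}\big)\,\|[b,I_\a]f\|_{L_\Psi(B(x,r))}.$$
Applying Lemma~\ref{lem5.1.} to the inner $L_\Psi$-norm, and using $\Psi^{-1}\big(|B(x,r)|^{-1}\big)\thickapprox\Psi^{-1}\big(r^{-n}\big)$ to absorb the leading factor $1/\Psi^{-1}(r^{-n})$, this reduces the whole problem to the single estimate
$$\sup_{x\in\Rn,\; r>0}\varphi_2(x,r)^{-1}\int_{r}^{\i}\Big(1+\ln\tfrac{t}{r}\Big)\|f\|_{L_\Phi(B(x,t))}\,\Psi^{-1}\big(t^{-n}\big)\frac{dt}{t}\;\lesssim\;\|f\|_{M_{\Phi,\varphi_1}},$$
where I have enlarged the lower limit of integration from $2r$ down to $r$, which only increases the left-hand side.

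For this last inequality I would apply Theorem~\ref{thm3.2.com} for each fixed $x$, with the weights
$$g(t)=\|f\|_{L_\Phi(B(x,t))},\quad w(t)=\Psi^{-1}\big(t^{-n}\big)\,t^{-1},\quad v_2(r)=\varphi_2(x,r)^{-1},\quad v_1(t)=\varphi_1(x,t)^{-1}\Phi^{-1}\big(t^{-n}\big).$$
Two hypotheses of that theorem must be checked: the function $g$ is non-decreasing in $t$, since $B(x,t_1)\subset B(x,t_2)$ forces $\|f\chi_{B(x,t_1)}\|_{L_\Phi}\le\|f\chi_{B(x,t_2)}\|_{L_\Phi}$, and $v_1$ is bounded outside a neighborhood of the origin, as demanded. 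With these choices the criterion $B<\i$ of Theorem~\ref{thm3.2.com} reads
$$\sup_{r>0}\varphi_2(x,r)^{-1}\int_{r}^{\i}\Big(1+\ln\tfrac{t}{r}\Big)\frac{\Psi^{-1}\big(t^{-n}\big)}{\ess_{t<s<\i}\,v_1(s)}\frac{dt}{t}<\i,$$
and the elementary identity $1/\ess_{t<s<\i}v_1(s)=\es_{t<s<\i}\big(\varphi_1(x,s)/\Phi^{-1}(s^{-n})\big)$ turns this into precisely the standing hypothesis \eqref{eq3.6.VZfrMax}.

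Theorem~\ref{thm3.2.com} then delivers
$$\|[b,I_\a] f\|_{M_{\Psi,\varphi_2}}\lesssim \|b\|_{*}\sup_{x\in\Rn,\;r>0}\varphi_1(x,r)^{-1}\Phi^{-1}\big(r^{-n}\big)\|f\|_{L_\Phi(B(x,r))}=\|b\|_{*}\,\|f\|_{M_{\Phi,\varphi_1}},$$
which is the asserted bound. I expect no serious obstacle in this final step: every genuinely analytic difficulty---the splitting $f=f_1+f_2$, the control of the $BMO$ oscillations through Lemma~\ref{Bmo-orlicz} and \eqref{propBMO}, and the $L_\Phi\to L_\Psi$ boundedness of Theorem~\ref{comorlicz}---has already been discharged inside Lemma~\ref{lem5.1.}. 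The only point requiring genuine care is the bookkeeping that converts the $\ess$ in the denominator of the Hardy criterion into the $\es$ of \eqref{eq3.6.VZfrMax}, while the logarithmic factor is accommodated automatically by invoking $H^{\ast}_{w}$ rather than $H_{w}$.
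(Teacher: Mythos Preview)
Your proposal is correct and follows precisely the approach of the paper, whose proof consists of a single sentence invoking Lemma~\ref{lem5.1.} and Theorem~\ref{thm3.2.com} ``in the same manner as in the proof of Theorem~\ref{thm4.4.}''. You have simply made explicit the choice of weights $g$, $w$, $v_1$, $v_2$ and the $\ess$/$\es$ bookkeeping that the paper leaves to the reader.
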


\begin{proof}
The statement of Theorem \ref{3.4.XcomT} follows by Lemma \ref{lem5.1.} and Theorem \ref{thm3.2.com} in the same manner as in the proof of Theorem \ref{thm4.4.}.
\end{proof}

If we take $\Phi(t)=t^{p},\,\Psi(t)=t^{q},\,1< p,q<\i$ at Theorem \ref{3.4.XcomT} we get following corollary which was proved in \cite{GULAKShIEOT2012} (see, also \cite{GulShu}).
\begin{cor}\label{bounComRszGenMor}
Let $0<\a<n$, $1< p < \frac{n}{\a}$, $\frac{1}{q}=\frac{1}{p}-\frac{\a}{n}$, $b\in BMO(\Rn)$ and $(\varphi_{1},\varphi_{2})$ satisfy the condition
\begin{equation*}
\int_{r}^{\i}\Big(1+\ln \frac{t}{r}\Big)\frac{\es_{t<s<\i}\varphi_{1}(x,s)s^{\frac{n}{p}}}{t^{\frac{n}{q}+1}}dt\le C\varphi_{2}(x,r),
\end{equation*}
where $C$ does not depend on $x$ and $r$. Then $[b,I_{\a}]$ is bounded from $M_{p,\varphi_{1}}$ to $M_{q,\varphi_{2}}$.
\end{cor}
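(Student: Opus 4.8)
The plan is to derive this corollary as a direct specialization of Theorem \ref{3.4.XcomT} to the power Young functions $\Phi(t)=t^p$ and $\Psi(t)=t^q$. Recall from Section \ref{secorlmor} that under the choice $\Phi(t)=t^p$ one has $M_{\Phi,\varphi}(\Rn)=M_{p,\varphi}(\Rn)$, and likewise $M_{\Psi,\varphi}(\Rn)=M_{q,\varphi}(\Rn)$; hence it suffices to check that the hypotheses of Theorem \ref{3.4.XcomT} hold for this pair and that the abstract condition \eqref{eq3.6.VZfrMax} collapses into the stated power-weight condition.

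First I would verify the defining relation between $\Phi$ and $\Psi$. Since $\Phi^{-1}(t)=t^{1/p}$ and $\Psi^{-1}(t)=t^{1/q}$, the requirement $\Psi^{-1}(t)=\Phi^{-1}(t)\,t^{-\a/n}$ reads $t^{1/q}=t^{1/p-\a/n}$, i.e. $\tfrac1q=\tfrac1p-\tfrac\a n$, which is exactly the hypothesis of the corollary. Next I would confirm the index conditions. For $\Phi(t)=t^p$ a direct computation gives $t\Phi'(t)/\Phi(t)\equiv p$, so $a_\Phi=b_\Phi=p$, and $1<p<\tfrac n\a<\i$ yields $1<a_\Phi\le b_\Phi<\i$; similarly $a_\Psi=b_\Psi=q$, and since $\tfrac1q=\tfrac1p-\tfrac\a n$ with $1<p<\tfrac n\a$ forces $1<q<\i$, the condition $1<a_\Psi\le b_\Psi<\i$ also holds. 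Thus both $\Phi,\Psi\in\Delta_2\cap\nabla_2$ in the required range.

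It then remains to substitute the power inverses into \eqref{eq3.6.VZfrMax}. Using $\big(\Phi^{-1}(s^{-n})\big)^{-1}=s^{n/p}$ and $\Psi^{-1}(t^{-n})=t^{-n/q}$, the inner factor becomes $\es_{t<s<\i}\varphi_1(x,s)\,s^{n/p}$ while $\Psi^{-1}(t^{-n})\,\tfrac{dt}{t}=\tfrac{dt}{t^{n/q+1}}$, so condition \eqref{eq3.6.VZfrMax} reduces verbatim to
$$
\int_r^\i\Big(1+\ln\tfrac tr\Big)\frac{\es_{t<s<\i}\varphi_1(x,s)\,s^{n/p}}{t^{n/q+1}}\,dt\le C\,\varphi_2(x,r),
$$
which is precisely the hypothesis imposed on $(\varphi_1,\varphi_2)$. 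With all hypotheses of Theorem \ref{3.4.XcomT} verified, that theorem gives the boundedness of $[b,I_\a]$ from $M_{p,\varphi_1}(\Rn)$ to $M_{q,\varphi_2}(\Rn)$, together with the estimate $\|[b,I_\a]f\|_{M_{q,\varphi_2}}\lesssim\|b\|_*\|f\|_{M_{p,\varphi_1}}$. There is no genuine obstacle here; the only point requiring care is the bookkeeping of exponents when passing from the Orlicz inverses $\Phi^{-1},\Psi^{-1}$ to the power weights, ensuring in particular that the logarithmic factor $1+\ln(t/r)$ is carried through unchanged from the general commutator condition to the Morrey one.
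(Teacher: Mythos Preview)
Your proposal is correct and follows exactly the approach indicated in the paper, which simply states that the corollary follows by taking $\Phi(t)=t^p$, $\Psi(t)=t^q$ in Theorem \ref{3.4.XcomT}. Your verification of the index conditions $a_\Phi=b_\Phi=p$, $a_\Psi=b_\Psi=q$ and the reduction of \eqref{eq3.6.VZfrMax} to the power-weight condition is precisely the omitted routine check.
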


\

In the case $\varphi_1(x,r)=\frac{\Phi^{-1}\big(r^{-n}\big)}{\Phi^{-1}\big(r^{-\lambda_1}\big)}$, $\varphi_2(x,r)=\frac{\Psi^{-1}\big(r^{-n}\big)}{\Psi^{-1}\big(r^{-\lambda_2}\big)}$  from Theorem \ref{3.4.XcomT} we get the following Spanne type theorem for the boundedness of the operator $[b,I_{\a}]$ on Orlicz-Morrey spaces.
\begin{cor} \label{ggffddF}
Let $0<\a<n$, $0 \le \lambda_1, \lambda_2 < n$ and $b\in BMO(\Rn)$. Let also $\Phi$ be a Young function and $\Psi$ defined, via its inverse, by setting, for all $t\in(0,\i)$, $\Psi^{-1}(t):=\Phi^{-1}(t)t^{-\a/n}$, $1<a_{\Phi}\le b_{\Phi}<\i$, $1<a_{\Psi}\le b_{\Psi}<\i$ and $(\Phi, \Psi)$ satisfy the condition
\begin{equation*}
\int_{r}^{\i} \Big(1+\ln \frac{t}{r}\Big)  \frac{\Psi^{-1}\big(t^{-n}\big)}{\Phi^{-1}\big(t^{-\lambda_1}\big)}\frac{dt}{t}  \le C \, \frac{\Psi^{-1}\big(r^{-n}\big)}{\Psi^{-1}\big(r^{-\lambda_2}\big)},
\end{equation*}
where $C$ does not depend on $r$.
Then $[b,I_{\a}]$ is bounded from $M_{\Phi,\lambda_1}(\Rn)$ to $M_{\Psi,\lambda_2}(\Rn)$.
\end{cor}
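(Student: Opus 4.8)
The plan is to obtain Corollary~\ref{ggffddF} as the special case of Theorem~\ref{3.4.XcomT} corresponding to the weights
\[
\varphi_1(x,r)=\frac{\Phi^{-1}\big(r^{-n}\big)}{\Phi^{-1}\big(r^{-\lambda_1}\big)},
\qquad
\varphi_2(x,r)=\frac{\Psi^{-1}\big(r^{-n}\big)}{\Psi^{-1}\big(r^{-\lambda_2}\big)},
\]
which are exactly the choices that recover $M_{\Phi,\lambda_1}=M_{\Phi,\varphi_1}$ and $M_{\Psi,\lambda_2}=M_{\Psi,\varphi_2}$, as recorded after the definition of the generalized Orlicz--Morrey space. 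All the structural hypotheses on $\Phi$, $\Psi$ and $b$ in Corollary~\ref{ggffddF} are identical to those of Theorem~\ref{3.4.XcomT}, so the whole matter reduces to verifying that the abstract condition~\eqref{eq3.6.VZfrMax} specializes, for this pair $(\varphi_1,\varphi_2)$, to the integral inequality displayed in the corollary.

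First I would insert the chosen $\varphi_1$ into the integrand of~\eqref{eq3.6.VZfrMax}. The quotient collapses to
\[
\frac{\varphi_1(x,s)}{\Phi^{-1}\big(s^{-n}\big)}=\frac{1}{\Phi^{-1}\big(s^{-\lambda_1}\big)},
\]
which is independent of $x$, and the remaining task is to evaluate the inner essential infimum $\es_{t<s<\i}\big[\Phi^{-1}\big(s^{-\lambda_1}\big)\big]^{-1}$. The key observation is a monotonicity one: since $\Phi^{-1}$ is nondecreasing and $s\mapsto s^{-\lambda_1}$ is nonincreasing for $\lambda_1\ge0$, the map $s\mapsto\big[\Phi^{-1}\big(s^{-\lambda_1}\big)\big]^{-1}$ is nondecreasing on $(0,\i)$. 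Hence its essential infimum over $(t,\i)$ equals its value at the left endpoint, namely $\big[\Phi^{-1}\big(t^{-\lambda_1}\big)\big]^{-1}$ (in the degenerate case $\lambda_1=0$ the integrand is constant in $s$ and the same value results trivially).

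With this identification the left-hand side of~\eqref{eq3.6.VZfrMax} becomes
\[
\int_{r}^{\i}\Big(1+\ln\frac{t}{r}\Big)\frac{\Psi^{-1}\big(t^{-n}\big)}{\Phi^{-1}\big(t^{-\lambda_1}\big)}\frac{dt}{t},
\]
while its right-hand side $\varphi_2(x,r)$ is precisely $\Psi^{-1}\big(r^{-n}\big)/\Psi^{-1}\big(r^{-\lambda_2}\big)$. Thus the hypothesis of the corollary is nothing but condition~\eqref{eq3.6.VZfrMax} for this particular $(\varphi_1,\varphi_2)$, and the asserted boundedness of $[b,I_{\a}]$ from $M_{\Phi,\lambda_1}(\Rn)$ to $M_{\Psi,\lambda_2}(\Rn)$ follows at once by invoking Theorem~\ref{3.4.XcomT}. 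I do not anticipate any substantial obstacle; the only point demanding a little care is the monotonicity argument that lets one replace the essential infimum by its endpoint value, and one must keep in mind the convention $\es=\mathrm{ess\,inf}$ so that this replacement goes in the correct direction.
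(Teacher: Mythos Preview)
Your proposal is correct and follows exactly the paper's own derivation: the corollary is obtained by specializing Theorem~\ref{3.4.XcomT} to $\varphi_1(x,r)=\Phi^{-1}(r^{-n})/\Phi^{-1}(r^{-\lambda_1})$ and $\varphi_2(x,r)=\Psi^{-1}(r^{-n})/\Psi^{-1}(r^{-\lambda_2})$. Your added verification that the essential infimum in~\eqref{eq3.6.VZfrMax} collapses to $1/\Phi^{-1}(t^{-\lambda_1})$ by monotonicity is a detail the paper leaves implicit, but it is precisely the computation needed.
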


\begin{rem}
If we take $\Phi(t)=t^{p},\,\Psi(t)=t^{q},\,1\le p,q<\i$ at Corollary \ref{ggffddF} we get Spanne type boundedness of $[b,I_{\a}]$, i.e. if $0<\a<n$, $1<p<\frac{n}{\a}$, $0<\lambda<n-\a p$, $\frac{1}{p}-\frac{1}{q}=\frac{\a}{n}$ and $\frac{\lambda}{p}=\frac{\mu}{q}$, then for $p>1$ the operator $[b,I_{\a}]$ is bounded from $M_{p,\lambda}(\Rn)$ to $M_{q,\mu}(\Rn)$ and for $p=1$, $[b,I_{\a}]$ is bounded from $M_{1,\lambda}(\Rn)$ to $WM_{q,\mu}(\Rn)$.
\end{rem}

\

{\bf Acknowledgements.} The authors would like to express their gratitude to the referees for his very valuable comments and suggestions.
{The research of V. Guliyev and F. Deringoz were partially supported by the grant of Ahi Evran University Scientific Research Projects (PYO.FEN.4003.13.003) and (PYO.FEN.4003-2.13.007).}

The author declare that there is no consist of interests regarding the publication of this article.

\

\end{document}